\title[Critical $k$-Very Ampleness for Abelian Surfaces]{Critical $k$-Very Ampleness for Abelian Surfaces}
\author{Wafa Alagal}
\email{W.Alagal@sms.ed.ac.uk}
\author{Antony Maciocia}
\email{A.Maciocia@eda.c.uk}
\address{School of Mathematics\\ University of Edinburgh\\ Mayfield
  Road\\Edinburgh\\
EH9 3JZ}
\subjclass[2010]{Primary: 14C20, Secondary: 14D22, 14K99}
\begin{document}
\newcommand{\Coh}{\operatorname{Coh}}
\newcommand{\Amp}{\operatorname{Amp}}
\newcommand{\Ext}{\operatorname{Ext}}
\newcommand{\ch}{\operatorname{ch}}
\newcommand{\Supp}{\operatorname{Supp}}
\newcommand{\Hom}{\operatorname{Hom}}
\newcommand{\geg}{\operatorname{deg}}
\newcommand{\tors}{\operatorname{tors}}
\newcommand{\A}{\mathcal{A}}
\newcommand{\MGS}[1]{\mathcal{M}^{\mathrm{GS}}_{#1}}
\newcommand{\MBS}[2]{\mathcal{M}^{\mathrm{BS},#1}_{#2}}
\renewcommand{\P}{\mathcal{P}}

\begin{abstract}
Let $(S,L)$ be a polarized abelian surface of Picard rank one
and let $\phi$ be the function which takes each ample line 
bundle $L'$ to the least integer $k$ such that $L'$ is $k$-very ample
but not $(k+1)$-very ample. We use Bridgeland's
stability conditions and Fourier-Mukai techniques to give a closed
formula for $\phi(L^n)$ as a function of $n$ showing that it is linear
in $n$ for $n>1$. As a byproduct, we calculate the walls in the
Bridgeland stability space for certain Chern characters. 
\end{abstract}
\maketitle

\newtheorem{myle}{Lemma}[section]
\newtheorem{mythe}[myle]{Theorem}
\newtheorem{mypr}[myle]{Proposition}
\newtheorem{mycl}[myle]{claim}
\newtheorem{myco}[myle]{Corollary}
\newtheorem{quotethm}[]{Theorem}
\newtheorem{quoteprop}[]{Proposition}
\theoremstyle{definition}
\newtheorem{myrem}[myle]{Remark}
\newtheorem{myex}[myle]{Example}
\newtheorem{mydef}[myle]{Definition}
\numberwithin{equation}{section}
\section{Introduction}
The notion of $k$-very ampleness was introduced in the 1980s initially
to understand the idea of higher order embeddings. Weaker notions of
$k$-spanned (see \cite{BelS} and \cite{BFS}) and $k$-jet ampleness
(see \cite{BMS}) were
also considered. The definitions given all relate to asking that, for
a variety $V$ and ample line bundle $L$ on $V$, the
natural map $\Gamma(L)\to \Gamma(L/\mathcal{I})$ surjects for certain classes
of sheaf $\mathcal{I}$ of 0-subschemes of $V$. The notions differ in how fat
points are treated. In this paper we only consider the strongest notion
of $k$-very ampleness. Since $k$-very ampleness implies $(k-1)$-very
ampleness it is natural to consider the critical value of $k$ when a
line bundle $L$ is $k$-very ample but not $(k+1)$-very ample. We shall denote
this by $\phi(L)$. In the case when the Neron-Severi group is
generated by a single element (as we shall be assuming), we can view
the function $\phi$ as a function of a positive integer. It is then
natural to hope that $\phi(L^{n+1})$ is related to
$\phi(L^n)$. Unfortunately, $k$-very ampleness is not very 
well behaved with respect to tensoring. It is not even clear that
$\phi(L^n)$ can be expressed as a ``nice'' function of $n$. It is
typically bounded above by a degree $\dim V$ polynomial and below by a
linear function of $n$ and one would expect it to be eventually
polynomial for large enough $n$. We show, in fact, that $\phi(L^n)$
equals $c_1(L)^2(n-1)-2$ for $n\geq2$ for a polarized abelian
surface $(S,L)$ for which $\operatorname{NS}(S)=\langle L\rangle$.

One type of variety where most progress has been made is abelian
varieties. There are a number of, by now, classical results on very
ampleness of line bundles. For example, $L^3$ is always very ample for
any ample line bundle $L$. A little more recently, Debarre, Hulek and
Spandaw showed that a suitably generic $(1,d)$ polarization on a
$g$-dimensional abelian variety is very ample for $d>2^g$. For the
case $g=2$ and Picard rank $1$, this was extended by Bauer and
Szenberg (\cite{BS}) to compute $\phi(1)$ (see Proposition \ref{bauer}
below for the details). The issue of $k$-very ampleness and
$k$-spannedness was also studied by Terakawa who showed that they
coincide for a polarized abelian surface (see \cite[Cor. 4.2]{Ter2}). He also
gave necessary and sufficient conditions for when a line bundle is $k$-very ample
(see \cite{Ter1} and \cite[Theorem 1.1]{Ter2}) but these depend on the
existence of certain divisors and the resulting inequalities are tricky
to solve.

There is a clear relation between $k$-very ampleness and so called
``weak index theorem'' conditions arising in Fourier-Mukai
Theory for abelian varieties. These ideas have been extended by Popa and Pareschi
(\cite{PP1}) who introduce the notion of M-regularity and relate it to
$k$-jet ampleness in \cite{PP}. 

This paper is organized as follows. For the rest of the introduction
we define the $\phi$ function and recall some facts already
established in the literature. We also recall some facts about
Fourier-Mukai transforms and deduce some easy results about $\phi$. In
section 2 we give a brief
introduction to Bridgeland's stability conditions needed
prove the main theorem in this paper. In section 3 we recall the
notion of walls and show that, for the Chern character $(r,l,\chi)$
there are no walls.
We also use general stability machinery to provide a
useful technical lemma (\ref{x<d}) needed to prove our main theorem. In the final
section, we show how to use the technical lemma to bound $\phi$ from
above and then prove that the bound is sharp by computing walls
in the stability space associated to the Chern character
$(1,nl,(n-1)^2d+d+1)$. We then induct on $n$ to deduce the main
theorem making use of our technical lemma again:
\renewcommand{\thequotethm}{\ref{phi_L}}
 \begin{quotethm}
  Let $(S,L)$ be a polarized abelian surface with
  NS$(S)=\langle L\rangle$ and $c_1(L)^2=2d$, then $\phi(L^n) =2(n-1)d-2$. 
\end{quotethm}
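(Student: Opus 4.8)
The plan is to translate $k$-very ampleness into a Bridgeland-stability statement and then to locate the single wall that governs the critical value. First I would record the cohomological reformulation: for a length-$\ell$ subscheme $Z\subset S$ the sequence $0\to L^n\otimes\mathcal I_Z\to L^n\to L^n|_Z\to 0$ together with $H^1(L^n)=0$ shows that the restriction $\Gamma(L^n)\to\Gamma(L^n|_Z)$ fails to surject exactly when $H^1(L^n\otimes\mathcal I_Z)\neq 0$. Hence $\phi(L^n)=k$ is equivalent to two statements: $H^1(L^n\otimes\mathcal I_Z)=0$ for every $Z$ of length $\le k+1$, and $H^1(L^n\otimes\mathcal I_Z)\neq 0$ for some $Z$ of length $k+2$. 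Throughout I would work with the object $F_Z:=L^n\otimes\mathcal I_Z$, with $\ch(F_Z)=(1,nl,n^2d-\ell)$; at the conjectural critical length $\ell=2(n-1)d-1$ this is $(1,nl,(n-1)^2d+d+1)$, the character singled out in the introduction.

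The mechanism converting stability into vanishing is a phase comparison in the tilted heart. For a stability condition $\sigma_{\beta,\omega}$ with $\beta,\omega$ proportional to $l$, both $\mathcal O_S$ (up to shift) and $F_Z$ can be arranged to lie in the tilted heart $\mathcal A_{\beta,\omega}$, and $H^1(F_Z)=\Hom(\mathcal O_S,F_Z[1])$. If $F_Z$ is $\sigma_{\beta,\omega}$-stable and its phase is separated from that of $\mathcal O_S$ in the appropriate direction, this $\Hom$ vanishes; I would also use $H^2(F_Z)=0$, which holds for $n\ge 1$ because $\Hom(F_Z,\mathcal O_S)=H^0(L^{-n})=0$. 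Thus the lower bound $\phi(L^n)\ge 2(n-1)d-2$ reduces to proving $F_Z$ stable for every $Z$ of length $\le 2(n-1)d-1$. For this I would invoke the no-wall result of Section 3 for characters of shape $(r,l,\chi)$ and then compute the walls for $(1,nl,(n-1)^2d+d+1)$ explicitly, using Lemma \ref{x<d} to bound the numerical invariants of any potential destabilising subobject and thereby to exclude walls throughout the chamber $\ell\le 2(n-1)d-1$.

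For the matching upper bound $\phi(L^n)\le 2(n-1)d-2$ I would exhibit a genuine length-$2(n-1)d$ subscheme for which a wall is crossed. The natural candidate is a subscheme supported on a curve in a proper sub-linear system, so that, after Fourier--Mukai transform, $F_Z$ acquires a destabilising sub/quotient of character of the form $(r,l,\chi)$ controlled by Lemma \ref{x<d}; crossing the wall just computed then produces a nonzero class in $\Hom(\mathcal O_S,F_Z[1])=H^1(F_Z)$. The main obstacle, and the technical heart of the argument, is precisely this step: one must show not merely that a numerical wall exists at the predicted character, but that it is realised by an actual $0$-dimensional subscheme and that on the stable side no earlier wall intervenes. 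This is where the interplay between the wall computation and Lemma \ref{x<d} is essential, and where the Fourier--Mukai transform is used to identify the destabilising sub/quotient concretely.

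Finally I would run the induction on $n$. The base case $n=2$, giving $\phi(L^2)=2d-2$, can be read off from the wall computation directly; the excluded case $n=1$ is governed instead by Proposition \ref{bauer}. For the inductive step I would use the Fourier--Mukai transform to relate the stability picture of $F_Z$ for $L^n$ to that for $L^{n-1}$: since the transform preserves stability for the transformed stability condition, stability of the $L^{n-1}$-objects propagates to the $L^n$-objects across the relevant range of lengths, and Lemma \ref{x<d} again bounds the subobjects so that the only new wall is the predicted one at $\ell=2(n-1)d-1$. Combining the two bounds then yields $\phi(L^n)=2(n-1)d-2$, which is Theorem \ref{phi_L}.
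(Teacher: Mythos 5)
Your overall architecture --- upper bound from a special subscheme built out of a non-IT$_0$ sheaf of character $(0,l,d)$ via Lemma \ref{x<d}, lower bound from Bridgeland stability of $L^n\otimes\mathcal{I}_X$ at the critical length $2(n-1)d-1$, and an induction on $n$ with base case $n=2$ --- matches the paper. But two of your central mechanisms have genuine gaps. First, the conversion of stability into cohomology vanishing. You propose to get $H^1(F_Z)=0$ from a ``phase comparison'' between $\mathcal{O}_S$ and $F_Z$ in the tilted heart. In $\A_0$ the object $\mathcal{O}_S[1]$ has $\mu_{0,t}=+\infty$, so $H^1(F_Z)=\Ext^1(\mathcal{O}_S,F_Z)\cong\Hom(F_Z,\mathcal{O}_S[1])^*$ is a Hom from a finite-slope stable object to an infinite-slope one; stability forbids nothing here, and no choice of chamber separates the phases in the direction you need. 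The paper's actual mechanism is different: $E$ is IT$_0$ iff $\Phi(E)$ is a sheaf, the functor $\Phi[1]$ exchanges $\sigma_t$- and $\sigma_{1/t}$-stability (Remark \ref{huy}), Proposition \ref{Gstable} identifies the large-volume stable objects of negative slope as $H^{-1}=$ Gieseker stable bundle and $H^0=\mathcal{O}_Z$, and the Bogomolov inequality applied to $\ch(H^{-1}(\Phi(E)[1]))=((n-1)^2d+d+1,-nl,1+|Z|)$ forces $|Z|<1$, i.e.\ $\Phi^1(E)=0$. You would need to replace your phase argument with something of this kind.

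Second, and more seriously, your plan for the lower bound is to ``exclude walls throughout the chamber'' for the character $(1,nl,(n-1)^2d+d+1)$, so that stability of $F_Z$ for all $t$ follows and transports through $\Phi$ to the large-volume regime. But there \emph{are} walls: the paper's wall analysis shows the destabilizers are exactly the rank-one sheaves $L^{n-1}\otimes\P_{\hat x}\otimes\mathcal{I}_Y$ with $|Y|<d(n-2)-1$, which is a non-empty range as soon as $n\geq3$ and $d\geq2$. The real content of the induction is not that stability propagates from $L^{n-1}$ to $L^n$ under the transform (it does not, in the sense you describe), but that at every such wall the destabilizing short exact sequence $0\to L^{n-1}\otimes\mathcal{I}_Y\to L^n\otimes\mathcal{I}_X\to Q\to0$ has \emph{both} outer terms IT$_0$ --- the sub by the inductive hypothesis (since $|Y|<2d(n-2)-1$), the quotient by Lemma \ref{x<d} (since $\chi(Q)=1+d+|Y|\geq d+1$) --- so that $L^n\otimes\mathcal{I}_X$ is IT$_0$ even when it is destabilized. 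Without this step your argument fails for every $n\geq3$. Your upper-bound sketch is essentially right in spirit, though you still owe the verification that the extension of $Q$ by $L^{n-1}$ is torsion-free, hence actually of the form $L^n\otimes\mathcal{I}_X$.
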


\subsection*{Acknowledgements}
The authors are grateful to Kota Yoshioka for pointing out the work of
Hiroyuki Terakawa and to the referee for a number of useful comments
and corrections.

\subsection{$k$-Very Ample}
Let $V$ be a complete algebraic variety of dimension $g$ over an
algebraic closed field $\mathbb{K}$, $X$ a 0-dimensional
subscheme of $V$ with $|X|= d = \dim(H^0(\mathcal{O}_{X}))$ and
$L$ an invertible sheaf on $V$.  

\begin{mydef}
For each 0-scheme $X$ on $V$ we can consider the restriction map
$\rho_{X}$ to $X$ for the space of sections of $L$, which
fits into the exact sequence: 
\[0 \to H^0(V,L\otimes\mathcal{I}_{X}) \to H^0(V,L)
  \overset{\rho_{X}}{\rightarrow} H^0(\mathcal{O}_{X} ) \to
  H^1(V,L\otimes\mathcal{I}_{X})  \to H^1(V,L) \to 0\] 
$L$ is called $k$-very ample if $\rho_{X}$ is surjective for
all purely 0-dimensional subscheme $X$ of length $|X| \leq k+1$.  
\end{mydef}
\begin{myrem}
The following follows easily from the definition
\begin{itemize}
\item $L$ is 0-very ample if and only if  $L$ is generated by global section.
\item $L$ is 1-very ample if and only if  it is very ample.
\item If $L$ is $k$-very ample then $L$ is $(k-1)$-very ample. 
\end{itemize}
\end{myrem}
Let $\Amp(S)$ be the ample cone of $S$. By the properties above there
exists an integer $k$ for all $L\in \Amp(S)$ such that $L$ is $k$-very
ample but not $(k+1)$-very ample.  

\begin{mydef}
Define a map
\begin{equation*}
\phi : \Amp(S) \to \mathbb{Z}_{\geq-1}
\end{equation*}
which takes $L$ into the least integer $k$ such that $L$ is $k$-very
ample but not $(k+1)$-very ample. Define $\phi_L(n):= \phi(L^n)$, and
$\phi(n)$ if $L$ is understood. 
\end{mydef}
There is no obvious reason why this should be a good function of $n$
for any variety and, even for $\mathbb{P}^2$, it is hard to
compute. Specific values for some varieties are, however, well known:
\begin{myex} 
Let $(V,L)$ be a principally polarized abelian variety. Then $\phi_L(2)=0$.
\end{myex}
The following lemma, indirectly proved in \cite{BS} Propositions 3.2
and 3.3, gives the
value of $\phi_L(1)$ and we will reprove it (in slightly greater
generality for arbitrary type $(d_1,d_2)$) in \textsection4 in the spirit
of this paper:
 \renewcommand{\thequoteprop}{\ref{bauer}}
\begin{mypr} \cite{BS}
 If $L$ is an ample line bundle on an
 abelian surface $X$ with NS$(X)=\langle L\rangle$ and $c_1(L)^2=2d$, then 
 \[\phi_L(1)=\left \lfloor \frac{d-3}{2} \right \rfloor.\] 
\end{mypr}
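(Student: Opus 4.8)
The plan is to recast $k$-very ampleness as a cohomological vanishing and then as a Bogomolov-type semistability statement, in keeping with the rest of the paper. Since $L$ is ample on the abelian surface $S$, the index theorem gives $H^1(S,L)=H^2(S,L)=0$, so the defining exact sequence shows that $L$ is $k$-very ample exactly when $H^1(S,L\otimes\mathcal{I}_X)=0$ for every $0$-dimensional $X$ with $|X|\le k+1$. These conditions are monotone in length, so $\phi_L(1)=\ell_0-2$, where $\ell_0$ is the least length of a $0$-dimensional $X$ with $H^1(S,L\otimes\mathcal{I}_X)\ne0$. As $\lceil d/2\rceil-2=\lfloor(d-3)/2\rfloor$, it suffices to prove $\ell_0=\lceil d/2\rceil$.

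First I would pass from cohomology to a rank-two sheaf by the Serre construction. Serre duality on $S$ (with $\omega_S=\mathcal{O}_S$) gives $H^1(S,L\otimes\mathcal{I}_X)^\vee\cong\Ext^1(L\otimes\mathcal{I}_X,\mathcal{O}_S)$, so a non-zero class produces a non-split extension
\[0\to\mathcal{O}_S\to E\to L\otimes\mathcal{I}_X\to0\]
with $E$ torsion-free of rank two. Using $\ch(L\otimes\mathcal{I}_X)=(1,l,d-|X|)$ one finds $c_1(E)=l$ and $c_2(E)=|X|$. Conversely any non-split extension of this shape recovers the non-vanishing, so $\ell_0$ is the least value of $c_2$ for which such an $E$ exists.

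For the lower bound $\ell_0\ge\lceil d/2\rceil$ I would show that every such $E$ is $\mu$-semistable with respect to $l$. A saturated destabilising sub-line-bundle $M\subset E$ has class $ml$ with $m\ge1$ (since $\mu(E)=d$), but $\Hom(M,L\otimes\mathcal{I}_X)=0$ --- for $m\ge2$ because $(1-m)l$ is anti-ample, and for $m=1$ because $H^0(\mathcal{I}_X)=0$ --- so $M\hookrightarrow E$ composed with $E\twoheadrightarrow L\otimes\mathcal{I}_X$ vanishes and $M\hookrightarrow\mathcal{O}_S$, which is absurd. Bogomolov's inequality then gives $\Delta(E)=4c_2(E)-c_1(E)^2=4|X|-2d\ge0$, i.e. $|X|\ge\lceil d/2\rceil$; hence $L$ is $(\lceil d/2\rceil-2)$-very ample.

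It remains to prove the matching bound $\ell_0\le\lceil d/2\rceil$, which is the step I expect to be the main obstacle. The idea is to exhibit a $\mu$-stable rank-two sheaf $E$ with $c_1(E)=l$ and $c_2(E)=\lceil d/2\rceil$ and let $X$ be the zero scheme of a general section. Stability does much of the work: $H^2(E)=0$, so $h^0(E)\ge\chi(E)=\ch_2(E)=\lfloor d/2\rfloor>0$ (for $d\ge3$, the finitely many smaller $d$ being checked directly), and a section cannot vanish on an effective divisor $al$ with $a\ge1$ as that would destabilise $E$; hence its zero scheme $X$ is purely $0$-dimensional of length $c_2(E)=\lceil d/2\rceil$, and the extension it defines is non-split because the indecomposable $E$ is not $\mathcal{O}_S\oplus(L\otimes\mathcal{I}_X)$. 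The delicate point is the \emph{existence} of such a stable $E$ at the critical value: because $l$ is primitive no strictly $\mu$-semistable sheaf of class $(2,l,\ast)$ exists, so genuine $\mu$-stability is required, and when $d$ is even the Bogomolov bound is attained ($\Delta(E)=0$) --- the boundary case. Here I would invoke non-emptiness of the relevant moduli space, most naturally through the Fourier--Mukai transform already used in the paper, which builds $\Delta=0$ stable sheaves from simpler objects on the dual surface. This pins $\ell_0=\lceil d/2\rceil$ and gives $\phi_L(1)=\lfloor(d-3)/2\rfloor$.
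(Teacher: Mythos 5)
Your argument is correct (up to the small-$d$ cases and the harmless ambiguity that $\det E$ is only determined up to $\operatorname{Pic}^0$, both of which you flag or which are fixed by a translation as in the paper's Proposition 1.12), but it takes a genuinely different route from the paper. The paper stays entirely on the Fourier--Mukai side: writing $\ch(L\otimes\mathcal{I}_X)=(1,l,d-|X|)$, it notes $\ch(\Phi(L\otimes\mathcal{I}_X))=(d-|X|,-l,1)$ and that all such transforms are locally free (equivalently $L\otimes\mathcal{I}_X$ is IT$_0$) exactly when no stable sheaf of Chern character $(d-|X|,-l,2)$ exists, which by the Bogomolov/non-emptiness criterion for $\MGS{}$ happens exactly when $2(d-|X|)>d$, i.e.\ $|X|\le\lfloor(d-1)/2\rfloor$; this gives both bounds in one stroke. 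You instead run the classical Serre-construction/Reider argument: Serre duality converts the failure of $k$-very ampleness into a non-split extension $0\to\mathcal{O}_S\to E\to L\otimes\mathcal{I}_X\to0$ with $\ch(E)=(2,l,d-|X|)$, semistability of $E$ plus Bogomolov gives the lower bound $|X|\ge\lceil d/2\rceil$, and non-emptiness of $\MGS{(2,l,\lfloor d/2\rfloor)}$ plus a section's zero locus gives the matching upper bound. The two proofs are parallel in substance --- each reduces both directions to the Bogomolov inequality together with the non-emptiness of a moduli space whose (virtual) dimension is at the boundary value $2$, which in either approach rests on Mukai/Yoshioka --- but the paper's version is shorter because the IT$_0$ reformulation lets it dispense with your semistability lemma and the analysis of sections, while yours is more elementary and self-contained (no Fourier--Mukai input beyond the non-emptiness result) and would transport to surfaces without a Fourier--Mukai partner.
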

Upper and lower bounds for $\phi$ are also known. It is clear that, if
$H^1(L)=0$ (as is the case, for example, for an ample line bundle on
an abelian variety) then an upper bound for $\phi(n)$ can be given by $\chi(L^n)-1$
since $\chi(L^n\otimes\mathcal{I}_X)=\chi(L^n)-|X|$. For a polarized
abelian surface, this is $n^2d-1$. A more careful analysis (from the
condition $L^2\geq 4k+6$ in \cite[Theorem 1.1]{Ter2}) gives
$\frac{1}{2}(n^2d-3)$ which is consistent
with Theorem \ref{phi_L}. A non-trivial lower bound is much
harder to come by but Reider's Theorem (for the most useful version,
see \cite[\S2]{AB}) provides one, at least when certain divisors do
not exist, as it says that if $c_1(L^n)^2>(k+2)^2$ then $L^n$ is
$k$-very ample. If we apply this to the Picard rank one abelian surface
case where such divisors do not exist, we see that $\phi(n)\geq\lceil
\sqrt{2d}n\rceil-3$. But this is not even sharp for $d=1$ and $n=2$.

Reider's Theorem arose in the situation where $L\otimes\mathcal{I}_Z$ is used
to construct vector bundles of rank $2$. Key in his construction is
the Bogomolov Inequality for semi-stable sheaves. We will also use
this in various places and recall it here (see \cite[Theorem 7.3.1]{HL} for a proof).
\begin{mydef}
A torsion-free sheaf $E$ is $\mu$-stable ($\mu$-semistable) with
respect to $l$ if for each proper subsheaf $F$ we have
$$\mu(F) < \mu(E) (\mu(F) \leq \mu(E))$$ 
where $\mu(E)= c_1(E).l^{g-1}/ r(E)$. 
\end{mydef}
Recall the Bogomolov inequality which provides us with a very useful
necessary condition for $\mu$-semistability:
\begin{mypr} 
 Let $V$ be a smooth projective variety of dimension $n$ and $l$ be an
 ample divisor on $V$. If $E$ is a $\mu$-semistable (with respect to
 $l$) torsion sheaf of
 rank $r$ on $V$, then 
 \begin{equation*}
(r-1)c_1^2(E).l^{n-2}\leq 2rc_2^2(E).l^{n-2}.
\end{equation*}
For the case of an abelian surface this reads:
\begin{equation*}
2r(E)\chi(E) \leq c^2_1(E).
\end{equation*}
\end{mypr}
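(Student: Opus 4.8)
The inequality is the classical Bogomolov inequality, and the cleanest self-contained route is to reduce to the case of a surface and then run Bogomolov's tensor-power argument. For the reduction I would invoke the Mehta--Ramanathan/Flenner restriction theorem: if $E$ is $\mu$-semistable on $V$ with respect to $l$, then for a general complete intersection surface $S = V \cap H_1 \cap \dots \cap H_{n-2}$ with $H_i \in |ml|$ and $m \gg 0$, the restriction $E|_S$ is again $\mu$-semistable, and $(2rc_2(E) - (r-1)c_1^2(E)) \cdot l^{n-2}$ equals, up to the positive factor $m^{n-2}$, the analogous quantity for $E|_S$ on $S$. It therefore suffices to prove $(r-1)c_1^2(E) \le 2rc_2(E)$ on a smooth projective surface, and I would work in characteristic zero.

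On a surface write $\Delta(E) = 2rc_2(E) - (r-1)c_1^2(E) = \ch_1(E)^2 - 2\ch_0(E)\ch_2(E)$, so the claim is $\Delta(E) \ge 0$; I would argue by contradiction, assuming $\Delta(E) < 0$. The key structural input is that in characteristic zero tensor products and duals of $\mu$-semistable sheaves are again $\mu$-semistable. Hence the endomorphism sheaf $E \otimes E^\vee$ is $\mu$-semistable of slope $0$, is self-dual, has $c_1(E \otimes E^\vee) = 0$, and, by the elementary identity $\Delta(A \otimes B) = \operatorname{rk}(B)^2 \Delta(A) + \operatorname{rk}(A)^2 \Delta(B)$, satisfies $\Delta(E \otimes E^\vee) = 2r^2 \Delta(E) < 0$. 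Passing to the tensor powers $F_m = (E \otimes E^\vee)^{\otimes m}$, all $\mu$-semistable of slope $0$ and self-dual, the same identity gives $\ch_2(F_m) = -c_2(F_m) = m\,r^{2m-2}|\Delta(E)|$, which is positive and grows, relative to the rank $\operatorname{rk}(F_m) = r^{2m}$, linearly in $m$.

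Since $c_1(F_m) = 0$, Riemann--Roch reduces to $\chi(F_m) = \ch_2(F_m) + \operatorname{rk}(F_m)\chi(\mathcal{O}_S)$, so $\chi(F_m)/\operatorname{rk}(F_m) \to +\infty$; combined with self-duality and Serre duality, which give $h^2(F_m) = h^0(F_m)$ when $K_S$ is trivial, this forces $h^0(F_m) \ge \tfrac12\chi(F_m)$ and hence an unbounded number of sections per unit rank. The contradiction then comes from semistability: the image of the evaluation map $H^0(F_m) \otimes \mathcal{O}_S \to F_m$ is a globally generated, slope-$0$, $\mu$-semistable subsheaf, and semistability bounds how many independent sections such a sheaf can carry per unit rank, contradicting the linear-in-$m$ lower bound once $m$ is large; this forces $\Delta(E) \ge 0$. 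For an abelian surface the Todd class is trivial, so $\chi(E) = \ch_2(E) = \tfrac12(c_1^2(E) - 2c_2(E))$; substituting $2rc_2(E) = \Delta(E) + (r-1)c_1^2(E)$ gives $2r\chi(E) = c_1^2(E) - \Delta(E) \le c_1^2(E)$, the stated form.

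The hard part is everything in the third paragraph. The two substantive ingredients are (i) that $\mu$-semistability is preserved under tensor product in characteristic zero, itself a nontrivial theorem (provable through the restriction theorem and the curve case), which is what makes the family $F_m$ usable, and (ii) the sharp bound on the sections of a slope-$0$ semistable sheaf needed to defeat the growth of $\ch_2(F_m)$, together with the extra care required at the Serre-duality step when $K_S$ is not trivial (so that $h^2(F_m) = h^0(F_m \otimes K_S)$ must be controlled rather than identified with $h^0(F_m)$). It is precisely because these inputs are delicate that it is cleanest to quote \cite[Theorem 7.3.1]{HL}, as the authors do, rather than reproduce the argument.
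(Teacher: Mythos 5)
The paper gives no proof of this proposition at all: it is quoted as a known result with the reference to \cite[Theorem 7.3.1]{HL}, which is exactly the source your sketch outlines (reduction to a surface by the restriction theorems, the tensor-power trick applied to $E\otimes E^{\vee}$, and the section bound for slope-zero semistable sheaves). Your outline of that argument and the derivation of the abelian-surface form $2r\chi(E)\leq c_1^2(E)$ are correct, and your conclusion that one should simply cite \cite{HL} is precisely what the authors do, so there is nothing further to compare.
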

We will also need to consider a finer stability for sheaves:
\begin{mydef}
A torsion-free sheaf $E$ is Gieseker stable (respectively Gieseker
semistable) with respect to $l$ if for each subsheaf $F$ we have 
$$P(F) < P(E) (P(F) \leq P(E))$$ 
where $P(E)= \dfrac{\chi(E\otimes L^n)}{r(E)}$, is the reduced Hilbert polynomial.  

We let $\MGS{\ch}$ denote the moduli space of Gieseker semistable
sheaves on $S$ with Chern character $\ch$ (or more generally, Simpson
semistable sheaves when the rank is zero). The virtual dimension of
$\MGS{(r,c,\chi)}$ is $2c^2d-2r\chi+2$ and these spaces are non-empty
exactly when this dimension is at least $2$ (in other words, exactly
when the Bogomolov inequality holds). The case when the dimension is
exactly $2$ was proved by Mukai in \cite[Prop 6.22]{Muk} and remaining cases are
dealt with in \cite[Thm 0.1]{Yosh2001}. We will need non-emptiness
specifically for the cases where $c=\pm l$ and $\chi=1$ or
$\chi=2$ which are studied in detail in \cite[\S6]{Yosh2001}.
\end{mydef} 
 
 \subsection{Fourier-Mukai transforms}
Let $V$ and $\hat V$ be smooth projective varieties. Consider the flat projections $V
\overset{\pi}{\leftarrow} V\times \hat{V}
\overset{\hat{\pi}}{\rightarrow} \hat{V}$. 
 Let $\mathcal{P} \in D( V\times \hat{V})$, where $ D( V\times
 \hat{V})$ denotes the derived category of bounded complexes of
 coherent sheaves on $V\times \hat{V}$.  The Fourier-Mukai transform
 $\Phi$ is the functor 
 \begin{equation}
\Phi: D(V) \to D(\hat{V})
\end{equation}
which takes $A$ into $R\pi_*(L\hat{\pi}^*A
\overset{L}{\otimes}\mathcal{P})$ (See \cite{Hu}). Denote its cohomology by
$\Phi^i$. 
In fact, we shall only consider the classical Fourier-Mukai transform
where $\mathcal{P}$ is the Poincar\'e bundle on and abelian surface $V=S$. Then
$\Phi$ has a quasi-inverse given (up to shift) by the transform  
 \begin{equation}
\hat{\Phi}: D(\hat{S}) \to D(S) 
\end{equation}
with kernel $\hat{\mathcal{P}} \in D( \hat{S}\times S)$ , where
$\hat{\mathcal{P}}=s^*\mathcal{P}$ and $s:
S\times \hat{S} \to  \hat{S}\times S$ is $\begin{pmatrix} 
0 &-1 \\ 
 1& 0
\end{pmatrix}$.
\begin{mydef}
An object $E$ satisfies WIT$_n$ if $\Phi^i(E)=0$ for all $i \neq n$.
\end{mydef}
\begin{mydef}
An object $E$ satisfies IT$_n$ if $H^i(E \otimes
\mathcal{P}_{\hat{x}}) = 0$ for all $\hat{x} \in \hat{S}$ the dual of
$S$ and $i \neq n$. In which case, $\Phi^n(E)$ is a locally free
sheaf.  
\end{mydef}
\begin{myex}  \cite{DM}
Any ample line bundle $L$ on an abelian variety is IT$_0$. Any sheaf
which is WIT$_0$ is automatically IT$_0$ by the semi-continuity theorem.
\end{myex}

\begin{mypr}
  If $(S, \Phi )$ is an abelian variety and $L$ is IT$_0$, then $L$ is
  $k$-very ample if and only if $L\otimes\mathcal{I}_{X}$ is WIT$_0$ (and
  hence IT$_0$) for all 0-dimensional subschemes $X$ of length
  $|X| \leq k+1$.  
\end{mypr}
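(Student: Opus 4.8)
The plan is to translate $k$-very ampleness into a cohomological vanishing and then recognise that vanishing as the defining condition of IT$_0$ for the twisted ideal sheaf. Since $L$ is IT$_0$ we have $H^1(L)=H^2(L)=0$, so the exact sequence defining $\rho_X$ collapses to
\[0 \to H^0(L\otimes\mathcal{I}_X) \to H^0(L) \overset{\rho_X}{\to} H^0(\mathcal{O}_X) \to H^1(L\otimes\mathcal{I}_X) \to 0.\]
Hence $\rho_X$ is surjective precisely when $H^1(L\otimes\mathcal{I}_X)=0$, and $L$ is $k$-very ample if and only if $H^1(L\otimes\mathcal{I}_X)=0$ for every $0$-dimensional subscheme $X$ with $|X|\le k+1$.

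One implication is immediate. If $L\otimes\mathcal{I}_X$ is WIT$_0$ then it is IT$_0$ (by the example recalled above), so in particular $H^1(L\otimes\mathcal{I}_X)=0$, obtained by specialising to the trivial character $\mathcal{P}_0=\mathcal{O}_S$. Thus WIT$_0$ for all $X$ of length $\le k+1$ forces $\rho_X$ to be surjective for all such $X$, that is, $L$ is $k$-very ample.

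For the converse I would exploit that twisting by a degree-zero line bundle is the same as translating $L$. Because $L$ is ample, the polarisation map $\psi_L\colon S\to\hat S$, $a\mapsto t_a^*L\otimes L^{-1}$, is a surjective isogeny, so for every $\hat x\in\hat S$ there is $a\in S$ with $L\otimes\mathcal{P}_{\hat x}\cong t_a^*L$. Since $t_a$ is an automorphism of $S$ and $k$-very ampleness is invariant under automorphisms, each translate $L\otimes\mathcal{P}_{\hat x}$ is again $k$-very ample (and again IT$_0$). Applying the first paragraph to $L\otimes\mathcal{P}_{\hat x}$ gives $H^1\bigl((L\otimes\mathcal{P}_{\hat x})\otimes\mathcal{I}_X\bigr)=0$ for every $\hat x$ and every $X$ with $|X|\le k+1$. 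The corresponding $H^2$ vanishes automatically: the sequence
\[0\to(L\otimes\mathcal{P}_{\hat x})\otimes\mathcal{I}_X\to L\otimes\mathcal{P}_{\hat x}\to(L\otimes\mathcal{P}_{\hat x})\otimes\mathcal{O}_X\to0,\]
together with $H^1((L\otimes\mathcal{P}_{\hat x})\otimes\mathcal{O}_X)=0$ (as $X$ is $0$-dimensional) and $H^2(L\otimes\mathcal{P}_{\hat x})=0$ (by IT$_0$), squeezes $H^2((L\otimes\mathcal{P}_{\hat x})\otimes\mathcal{I}_X)$ between two zero groups. Since $(L\otimes\mathcal{I}_X)\otimes\mathcal{P}_{\hat x}=(L\otimes\mathcal{P}_{\hat x})\otimes\mathcal{I}_X$, this says $H^i((L\otimes\mathcal{I}_X)\otimes\mathcal{P}_{\hat x})=0$ for all $i>0$ and all $\hat x$; hence $L\otimes\mathcal{I}_X$ is IT$_0$ and in particular WIT$_0$.

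The one genuinely substantive step, and where I expect to have to be careful, is the translation argument: identifying $L\otimes\mathcal{P}_{\hat x}$ with a translate $t_a^*L$ and using automorphism-invariance of $k$-very ampleness to upgrade the single vanishing $H^1(L\otimes\mathcal{I}_X)=0$ into the whole family of vanishings over $\hat x\in\hat S$ that IT$_0$ demands. Everything else --- the collapse of the defining sequence, the implication WIT$_0\Rightarrow$ IT$_0$, and the automatic $H^2$ vanishing --- is routine.
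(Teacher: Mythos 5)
Your proposal is correct and follows essentially the same route as the paper: the easy direction specialises the IT$_0$ vanishing to the trivial character, and the substantive direction uses the surjectivity of the polarisation isogeny $\psi_L$ to identify $L\otimes\mathcal{P}_{\hat x}$ with a translate of $L$ and then transfers the vanishing to a translated subscheme of the same length. The only cosmetic differences are that the paper argues the hard direction by contradiction rather than directly, and leaves the (routine) $H^2$ vanishing implicit.
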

\begin{proof}
$" \Rightarrow "$ Suppose that $L$ is $k$-very ample and
  $L\otimes\mathcal{I}_{X}$ is not WIT$_0$ for some purely 0-dimensional
  subscheme $X$ of length $|X| \leq k+1$, so there exists $\hat{x} \in
  \hat{S}$ such that $H^1(L \mathcal{P}_{\hat{x}} \mathcal{I}_X) \neq
  0$. Pick $x$ for which $\hat{x}= \psi_L(x)$ where $\psi_L: S \to
  \hat{S}$ is the \'etale map which
  takes $x$ into $\tau_x^*L\otimes L^{-1}$, then $H^1(\tau^*_{-x}(L)
  \mathcal{I}_X) \neq 0$ and so
  $H^1(\tau^*_{-x}(L\otimes\mathcal{I}_{\tau_{-x}X})) \neq 0$. 
Hence $H^1(L\otimes\mathcal{I}_{\tau_{-x}X}) \neq 0$ where
$|\tau_{-x}X|\leq k+1$ and this contradicts the assumption.  

" $\Leftarrow$ " Since $L\otimes\mathcal{I}_{X}$ is WIT$_0$ for all purely
0-dimensional subscheme $X$ of length $|X| \leq k+1$, then
$H^1(L\otimes\mathcal{I}_{X})= 0$. Hence $L$ is $k$-very ample by definition.   
\end{proof}
 \begin{mypr}
 Let $(S,L)$ be an irreducible  principally polarized abelian surface,
 then $L^n$ is not $(2n-3)$-very ample. 
\end{mypr}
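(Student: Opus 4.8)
The plan is to exhibit a single purely $0$-dimensional subscheme $X$ of length $(2n-3)+1=2n-2$ for which the restriction map $\rho_X$ fails to surject, equivalently (by the Proposition just proved) for which $L^n\otimes\mathcal{I}_X$ fails to be WIT$_0$, i.e. $H^1(L^n\otimes\mathcal{I}_X)\neq 0$. Since a principal polarization on an \emph{irreducible} abelian surface has $c_1(L)^2=2$ and its theta divisor $C=\Theta\in|L|$ is a smooth irreducible curve of genus $2$, the entire argument is organised around restricting to $C$. (The statement is meant for $n\geq 2$; for $n=1$ the claim is vacuously false, and this is also the range covered by Theorem \ref{phi_L}.)

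First I would record the adjunction computation: because $K_S=\mathcal{O}_S$, adjunction gives $K_C=(K_S+C)|_C=L|_C$, a line bundle of degree $L\cdot C=c_1(L)^2=2=2g-2$, as it must be. I then choose $X$ to be any effective divisor on $C$ lying in the linear system $|L^{n-1}|_C|$. This system is nonempty for $n\geq 2$: by Riemann--Roch on $C$,
$$h^0(C,L^{n-1}|_C)\geq \deg(L^{n-1}|_C)-g+1=(2n-2)-2+1=2n-3\geq 1.$$
Such an $X$ is purely $0$-dimensional of length $\deg(L^{n-1}|_C)=2n-2$, which is exactly $(2n-3)+1$, the length needed to witness the failure of $(2n-3)$-very ampleness.

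The key step is a cohomological reduction to the curve. Writing $X\subset C\subset S$ and tensoring the ideal sequence $0\to\mathcal{I}_{C/S}\to\mathcal{I}_{X/S}\to\mathcal{I}_{X/C}\to 0$ by the locally free sheaf $L^n$ gives, using $\mathcal{I}_{C/S}=\mathcal{O}_S(-C)=L^{-1}$ and the projection formula along $C\hookrightarrow S$,
$$0\to L^{n-1}\to L^n\otimes\mathcal{I}_X\to L^n|_C(-X)\to 0.$$
For $n\geq 2$ the bundle $L^{n-1}$ is ample on the abelian surface, so $H^1(L^{n-1})=H^2(L^{n-1})=0$, and the long exact sequence yields $H^1(L^n\otimes\mathcal{I}_X)\cong H^1(C,L^n|_C(-X))$. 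The whole point of the choice of $X$ is that $\mathcal{O}_C(X)\cong L^{n-1}|_C$, so $L^n|_C(-X)\cong L|_C=K_C$, whence by Serre duality $H^1(C,L^n|_C(-X))\cong H^1(C,K_C)\cong\mathbb{K}\neq 0$. Therefore $H^1(L^n\otimes\mathcal{I}_X)\neq 0$, $\rho_X$ is not surjective, and $L^n$ is not $(2n-3)$-very ample.

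The only real content is the observation that restriction to the theta divisor, combined with the adjunction identity $K_C=L|_C$, turns the twist $L^n|_C(-X)$ into precisely the canonical bundle $K_C$ exactly when $X\in|L^{n-1}|_C|$; everything else (nonemptiness of the system, the vanishing $H^{1,2}(L^{n-1})=0$, and the purity and length of $X$) is routine. The hypothesis that the surface is \emph{irreducible} is used in exactly this place, to guarantee that $C$ is a smooth genus-$2$ curve so that adjunction and the Serre-duality computation apply; in the reducible (product) case the theta divisor degenerates to a union of elliptic curves and the argument would require modification.
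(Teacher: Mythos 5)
Your proof is correct. It shares its skeleton with the paper's proof --- both place $X$ on the theta divisor $C\in|L|$ with $|X|=2n-2$ and exploit the extension $0\to L^{n-1}\to L^n\otimes\mathcal{I}_X\to Q\to 0$ --- but the mechanism for detecting the failure of IT$_0$ is genuinely different. The paper takes an \emph{arbitrary} length-$(2n-2)$ subscheme of $C$, so that $Q$ has Chern character $(0,l,1)$, and argues via Fourier--Mukai: if $Q$ were IT$_0$ its transform would be a rank-one locally free sheaf with Chern character $(1,-l,0)$, which is impossible since a line bundle with $c_1=-l$ has $\ch_2=l^2/2=1$. You instead pick the special $X\in|L^{n-1}|_C|$ so that $Q\cong L^n|_C(-X)\cong K_C$ by adjunction, and conclude $H^1\neq0$ by Serre duality on the genus-$2$ curve. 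Your route is more elementary (no transform needed; indeed you do not even need the WIT$_0$ criterion, since $H^1(L^n)=0$ already identifies $\operatorname{coker}\rho_X$ with $H^1(L^n\otimes\mathcal{I}_X)$), and it correctly isolates where irreducibility and the restriction $n\geq2$ enter. What it gives up is the stronger conclusion implicit in the paper's argument --- that \emph{every} collinear subscheme of length $2n-2$ obstructs, possibly after twisting by some $\mathcal{P}_{\hat x}$ --- which is what the paper uses immediately afterwards to motivate the notion of collinear subschemes; your direct $H^1$ computation is genuinely special to $X\sim K_C\otimes L^{n-1}|_C$ being cut out by $|L^{n-1}|_C|$, since for other $X\subset C$ the degree-$2$ bundle $L^n|_C(-X)$ has vanishing $H^1$ and the obstruction only appears at a nonzero point of $\hat S$. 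Both arguments, as you note, require $n\geq2$ so that $L^{n-1}$ is IT$_0$ (respectively has vanishing $H^1$ and $H^2$).
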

\begin{proof}
 Let $X$ be a 0-dimensional subscheme of $D_L$ of length $2(n-1)$. Then we have a sequence
 $$0\to L^{n-1}\to L^n\otimes\mathcal{I}_{X}\to Q \to 0$$
 Suppose $Q$ is IT$_0$. The Chern character of $Q$ is
 $\ch(Q)=(0,l,(2n-1)-|X|)$. Since $\hat{Q}$ the transform of $Q$ has
 the Chern character $\ch(\hat{Q})=((2n-1)-|X|, -l,0)=(1,-l,0)$, but
 $\hat{Q}$ is locally-free which is impossible. So Q is not IT$_0$ and
 then $L^n\otimes\mathcal{I}_{X}$ is not IT$_0$. 
 \end{proof}
 Such $X$ we call collinear as $H^0(L\otimes\mathcal{I}_X)\neq 0$ so there
 exists $x \in S$ such that $X\subset \tau_xD_L$, a translation of the
 polarization divisor.  
\begin{myco}
 Let $(S,L)$ be an irreducible principally polarized abelian surface,
 then $\phi(n) \leq 2n-4$ for $n\geq2$.  
\end{myco}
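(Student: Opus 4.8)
The plan is to deduce this immediately from the preceding Proposition, combined with the monotonicity of $k$-very ampleness recorded in the Remark following the definition. Recall that if a line bundle is $k$-very ample then it is automatically $(k-1)$-very ample. Consequently, for a fixed $n$, the set of integers $k$ for which $L^n$ is $k$-very ample is an initial segment of $\mathbb{Z}_{\geq -1}$ of the form $\{-1,0,1,\dots,\phi(n)\}$, whose largest element is $\phi(n)$ by the very definition of the critical value $\phi$.

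The key observation is then that the Proposition above asserts precisely that $2n-3$ lies outside this set, since it states that $L^n$ fails to be $(2n-3)$-very ample. Because the set is an initial segment, no integer greater than or equal to $2n-3$ can belong to it either; in particular the maximum $\phi(n)$ must satisfy $\phi(n) < 2n-3$. As $\phi(n)$ is an integer, this strict inequality forces $\phi(n) \leq 2n-4$, which is the desired bound. Since we require $n \geq 2$, the right-hand side is nonnegative and the statement is consistent with the codomain $\mathbb{Z}_{\geq -1}$ of $\phi$.

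I expect essentially no obstacle here, as the entire geometric content is already contained in the construction of the collinear length-$2(n-1)$ subscheme $X \subset D_L$ used to prove the Proposition; the Corollary merely repackages that non-very-ampleness statement in terms of $\phi$. The only point genuinely worth verifying is the indexing, namely that the failure of $(2n-3)$-very ampleness yields the bound $2n-4$ rather than $2n-3$. This is exactly the content of the strict inequality $\phi(n) < 2n-3$ enforced by the monotonicity property, and it is what rounds the bound down by one.
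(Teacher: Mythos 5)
Your argument is correct and matches the paper's intent exactly: the Corollary is stated without proof precisely because it follows immediately from the preceding Proposition (failure of $(2n-3)$-very ampleness) together with the monotonicity of $k$-very ampleness, which is the deduction you spell out. The indexing check you perform ($\phi(n)<2n-3$ hence $\phi(n)\leq 2n-4$) is the only content, and you have it right.
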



\section{Bridgeland Stabiliy Conditions}\label{BS}
Now we will give a brief review of Bridgeland's stability conditions
(see \cite{TB} and we follow the conventions of \cite{Mwall} and \cite{AB}). Define for any $s\in
\mathbb{R}$ the following  
\begin{equation*}
F_s=\{ E \in \Coh_S | E \text{ is torsion-free and } \mu_+(E) \leq 2ds\},
\end{equation*}
\begin{equation*}
T_s=\{ E \in \Coh_S | E \text{ is torsion or } \mu_-(E/ \tors(E)) >
2ds\},
\end{equation*}
where $\mu_+(E)$ is the slope of the largest slope $\mu$-destabilizing
subsheaf of $E$ and $\mu_-(E)$ is the slope of the lowest slope
$\mu$-destabilizing quotient of $E$. We set

\begin{equation*}
\mathcal{A}_s=\{ A \in D(S) | A^i=0, i\notin \{0,-1\}, H^{-1}(A)\in F_s, H^0(A) \in T_s\}.
\end{equation*}

A group homomorphism $Z_{s,t}$ takes the Chern character $\ch(A)$ into 
\begin{align*}
Z_{s,t}(A) &= \left \langle e^{(s+ti)l}, \ch(A) \right \rangle\\ 
 &= -\chi+2dcs+dr(t^2-s^2)+2tdi(c-rs).
\end{align*}  
For each $A\in \mathcal{A}_s$ the slope $\mu_{s,t}(A)\in\mathbb{Q}\cup\{+\infty\}$ of $A$ is given by:
\begin{align}\mu_{s,t}(A)&=-\frac{\operatorname{Re} (Z_{s,t}(A))}{\operatorname{Im}(Z_{s,t}(A))} \\
&=\frac{\chi-2dcs-dr(t^2-s^2)}{2td(c-rs)}\end{align}  
if $\operatorname{Im}(Z_{s,t}(A))\neq0$ and $+\infty$ otherwise.
\begin{mydef}
We say that $E\in\A_s$ is $\sigma_{s,t}$-stable (respectively,
$\sigma_{s,t}$-semistable) if for all injections $F\to E$ in $\A_s$ we
have
$\mu_{s,t}(F)<\mu_{s,t}(E)$ ($\mu_{s,t}(F)\leq\mu_{s,t}(E)$,
respectively). It is well known that these give sensible stability
conditions on any smooth surface. When $s=0$, we will write $\sigma_t$
for $\sigma_{0,t}$.
\end{mydef}
Then the slope of $E=L^n\otimes\mathcal{I}_{X}$ where $\ch(E) = (1,nl,n^2d-|X|)$ is
$$\mu_{s,t}(E)=\frac{n^2d-|X|-2dns-d(t^2-s^2)}{2td(n-s)}.$$
Note that $n>s$ as $E\in T_s$.

\begin{myrem}\label{destab_is_sheaf}
Now suppose $F\in\A_s$ with $\ch (F)= (r, cl, \chi)$ destabilizes
$L^n\otimes\mathcal{I}_{X}$. Then we have a short exact sequence $F\to E\to Q$
in $\A_s$. Taking cohomology we see that $H^{-1}(F)=0$. Then $F\in
T_s$ and so $c>rs$. Notice also that $H^{-1}(Q)\in F_s$ is
torsion-free and since $0\to H^{-1}(Q)\to F\to E$ is exact, $F$ is
also torsion-free.
\end{myrem}
We also have
$$\mu_{s,t}(F) - \mu_{s,t}(E) > 0.$$
Therefore 
\begin{equation}\label{con}
\frac{\chi-2dcs+dr(s^2-t^2)}{2td(c-rs)}-\frac{n^2d-|X|-2dns-d(t^2-s^2)}{2td(n-s)} > 0.
\end{equation}
Define $f(F,E)$ to be the numerator of (\ref{con}), then 
\begin{align*}
f(F,E)&=(\chi-2dcs+dr(s^2-t^2))(n-s)\\&\hspace{2in}-(n^2d-|X|-2dns-d(t^2-s^2))(c-rs)\\
&=(n-s)\chi-c\bigl(n^2d-|X|-d(s^2+t^2)\bigr)+r\bigl(n^2ds-|X|s-dn(s^2+t^2)\bigr).
\end{align*}
We shall be most interested in the case when $s=0$. Then the destabilizing condition becomes
\begin{equation}
f(F,E)=n\chi-c(n^2d-|X|-dt^2)-dnrt^2> 0. 
\end{equation}
Therefore
\begin{equation*}
n\chi-cn^2d+c|X|> (nr-c)dt^2,
\end{equation*}
and $c\leq nr$ because $\mu(H^{-1}(Q))\leq0$ and
$\mu(F)\leq\mu(F/H^{-1}(Q))\leq\mu(E)$. Hence a necessary condition
for the existence of such a destabilizing object is  
\begin{equation}\label{f2}
n\chi-cn^2d+c|X|> 0.
\end{equation}
Recall from \cite[Prop 14.2]{TB} that in the ``large volume limit''
as $t\to\infty$, the $\sigma_t$-semistable objects $E$ with $\mu(E)>0$
are exactly the Gieseker semistable sheaves (when $s=0$). The case when $\mu(E)<0$
is similar. In this case, $r(E)<0$ when $s=0$:
\begin{mypr}\label{Gstable}
Suppose $F \in \mathcal{A}_0$ with $\mu(F)<0$. Then $F$ is
$\sigma_t$-semistable for all $t\gg0$ if and only if $H^0(F)$ is supported in dimension 0 and
$H^{-1}(F)$ is Gieseker semistable vector bundle.  
\end{mypr}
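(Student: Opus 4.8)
The plan is to reproduce, in the negative-rank regime, the large-volume analysis behind the quoted result \cite[Prop 14.2]{TB}, organising everything around the canonical sequence $H^{-1}(F)[1]\to F\to H^0(F)$ in $\A_0$ and the $t\to\infty$ expansion of the Bridgeland slope. Since $\mu(F)<0$ forces $r(F)<0$, write $\ch(H^{-1}(F))=(r_1,c_1l,\chi_1)$ and $\ch(H^0(F))=(r_0,c_0l,\chi_0)$, so that membership in $\A_0$ gives $r_1>0$, $r_0\ge0$, $c_1\le0$ and $c_0\ge0$. For any injection $G\hookrightarrow F$ in $\A_0$ with $\ch(G)=(r',c'l,\chi')$ and $\ch(F)=(r,cl,\chi)$ I would first record the identity
\[ \operatorname{Im}(Z_{0,t}(F))\,\operatorname{Im}(Z_{0,t}(G))\bigl(\mu_{0,t}(G)-\mu_{0,t}(F)\bigr)=2td\bigl(dt^{2}(rc'-r'c)+(\chi'c-\chi c')\bigr), \]
valid whenever both imaginary parts are positive. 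Thus for $t\gg0$ the comparison is governed first by the Mumford-type term $rc'-r'c$ and, on the locus where that vanishes, by the Gieseker-type term $\chi'c-\chi c'$; objects supported in dimension $0$ have $\operatorname{Im}(Z_{0,t})=0$ and hence slope $+\infty$.

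For the forward implication, assume $F$ is $\sigma_t$-semistable for $t\gg0$. Testing semistability against the canonical subobject $H^{-1}(F)[1]$, whose Mumford-type term against $F$ computes to $r_1c_0-r_0c_1\ge0$, forces this term to vanish; hence $c_0=0$. Since $\mu(F)<0$ makes $c=c_0-c_1>0$ we deduce $c_1<0$ and then $r_0=0$, so $H^0(F)$ is supported in dimension $0$. Because now $c\neq0$, the slope $\mu_{0,t}(F)$ is finite, so $F$ admits no nonzero subobject of dimension $0$ (such an object has slope $+\infty$); applying this to $E^{\vee\vee}/E$, which is a subobject of $E[1]\hookrightarrow F$ where $E:=H^{-1}(F)$, forces $E$ to be locally free. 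Finally the $\mu$- and then Gieseker-semistability of $E$ are read off from the leading- and next-order terms applied to the subobjects $E'[1]$ attached to saturated subsheaves $E'\subseteq E$; this is the negative-rank mirror of \cite[Prop 14.2]{TB}, which one may also derive from it using the dualizing functor $R\mathcal{H}om(-,\mathcal{O}_S)$ interchanging the regimes $\mu>0$ and $\mu<0$ while preserving $\sigma_{0,t}$-semistability.

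For the converse, suppose $E:=H^{-1}(F)$ is a Gieseker-semistable bundle and $T:=H^0(F)$ is $0$-dimensional. Then $E[1]$ is $\sigma_t$-semistable for $t\gg0$ by the mirror statement, and $\ch(F)=\ch(E[1])+(0,0,|T|)$ with $T$ of slope $+\infty$. Given $G\hookrightarrow F$ in $\A_0$, I would intersect $G$ with $E[1]$ and project the quotient to $T$, writing $0\to G_0\to G\to T'\to0$ with $G_0\subseteq E[1]$ and $T'\subseteq T$. When $G_0$ has Mumford slope strictly below that of $E[1]$ the leading $O(t)$ term already yields $\mu_{0,t}(G)\le\mu_{0,t}(F)$; the equal-slope subobjects are the only ones requiring the next-order term, and there the Gieseker-semistability of $E$ together with the contribution of $T'\subseteq T$ gives the inequality.

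The main obstacle is exactly this equal-Mumford-slope locus, where $rc'-r'c=0$ and semistability is decided by $\chi'c-\chi c'$. Here two delicate points must be controlled simultaneously: that the bookkeeping of these ties matches the comparison of reduced Hilbert polynomials defining Gieseker-semistability of $E$, and that the $0$-dimensional sheaf $T$ (which perturbs $\chi$ at order $1/t$, not its slope) neither destroys the inequality in the converse nor manufactures spurious destabilisers in the forward direction. Establishing the negative-rank mirror of the quoted Bridgeland statement cleanly --- equivalently, showing that a $0$-dimensional sheaf can occur only as a quotient, never as a subobject, of a $\sigma_t$-semistable object of negative rank, and that the shifted subobjects $E'[1]$ detect precisely the Gieseker inequalities --- is the technical heart of the argument, and is where I expect the bulk of the care to be needed.
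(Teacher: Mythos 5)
Your proposal takes essentially the same route as the paper: it transposes Bridgeland's large-volume argument \cite[Prop.~14.2]{TB} to the negative-rank regime, forces $H^0(F)$ to be $0$-dimensional from the leading term of the slope comparison on the canonical sequence, obtains local freeness of $H^{-1}(F)$ from the infinite-slope $0$-dimensional subobject $E^{\vee\vee}/E\hookrightarrow H^{-1}(F)[1]\hookrightarrow F$, and defers the remaining Gieseker comparison (and the converse) to the same next-order analysis as in \cite{TB} --- which is exactly what the paper's proof does, down to the alternative via the dualizing functor recorded in Remark~\ref{duality}. The only cosmetic difference is that the paper detects the $0$-dimensionality of $H^0(F)$ by testing the quotient $H^0(F)$ directly rather than the subobject $H^{-1}(F)[1]$.
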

\begin{proof}
 Proof follows in same way as that of \cite[Prop 14.2]{TB} by observing that if $E$
 is Bridgeland stable for all $t\gg0$ then $H^0(E)$ must be supported
 in dim 0, otherwise $\mu_{0,t}(H^0(E))$ is finite and $H^0(E)$
 destabilizes $E$ for $t\gg0$. Moreover $H^{-1}(E)[1]$ is locally free
 since  
 \begin{equation*}
0\to\mathcal{O}_Z \to H^{-1}(E)[1] \to H^{-1}(E)^{**}[1]\to0
\end{equation*}
is short exact sequence in $\mathcal{A}_0$ and then $\mathcal{O}_Z \to
E$ would destabilize $E$. The fact that $H^{-1}(E)$ is Gieseker semistable
follows in the same way as \cite{TB}. 
\end{proof}
\begin{myrem}\label{duality}
An alternative approach can be seen using an observation of Yanagida
and Yoshioka who show that the Bridgeland stability is preserved 
under $[1]\circ\Delta$, where
$\Delta(E)=\mathbf{R}\mathcal{H}om(E,\mathcal{O}_S)$ (see  \cite[Prop
2.6]{YY}) at least when $c_1\cdot\ell\neq0$. So if $F$ is 
$\sigma_t$-semistable then $F^{\vee}$ is $\sigma_t$-semistable and
$\mu(F^{\vee})> 0$. Then Prop. 14.2 in \cite{TB} implies that
$F^{\vee}$ is Gieseker semistable sheaf. Therefore $F^{\vee\vee} \cong F$ takes
required form. In particular, observe that $H^0(F)\neq0$ exactly when
$F^\vee$ is not locally-free.
\end{myrem}

\begin{myrem}\label{huy}
Huybrechts (\cite{Hupaper}) showed that $\Phi[1]$ preserves $\A_0$ and it
can also be shown (see for example, \cite[Prop 3.2]{MM}) that $E\in\A_0$ is
$\sigma_t$-stable if and only if $\Phi(E)$ is $\sigma_{1/t}$-stable
(and similarly for semistable).
\end{myrem}
\section{Walls and Moduli Spaces}
\begin{mydef}\label{moduli}
We let 
$\MBS{t}{\ch}$ denote the moduli space of $\sigma_t$-semistable
objects in $\A_0$. 
\end{mydef}
It is well known
that these spaces are projective varieties for a wide selection of spaces. For
the case of K3 and abelian surfaces it follows from the sliding down
the wall trick of \cite{Mwall} and applying Remark \ref{huy} but was
also independently observed by Miramide, Yanagida and Yoshioka
\cite[Thm 5.3]{MYY} in the Picard rank 1 case and proved more generally for K3 surfaces in
\cite[Thm 1.3]{bayermacri}. 

For example, in the large volume limit as $t\to\infty$,
$\MBS{t}{\ch}=\MGS{\ch}$ when $c_1(\ch)\cdot l>0$ and $r(\ch)>0$. Equality here
means that the points represent exactly the same objects in
$\operatorname{Coh}(S)\cap\A_0$ up to isomorphism. When the slope is
negative, Remark \ref{duality} implies that the large volume limit of
$\MBS{t}{\ch}$ for large $t$ is given by objects $E^\vee[1]$, where
$[E]\in\MGS{\ch^*:=(ch_0,-ch_1,ch_2)}$ and so
$\MBS{t}{\ch}\cong\MGS{\ch^*}$.

It may happen for some value of $t$ that the two moduli spaces are not
equal. In fact, there will be a strictly decreasing sequence $t_0, t_1,\ldots$ of values
of $t$ on either side of which $\MBS{t}{\ch}$ differ. We call these
walls (sometimes they are called mini-walls when we fix $s$).  Our aim
will be to identify these walls when $\ch=(0,l,\chi)$ and
$\ch=(1,nl,n^2d-|X|)$. In the first case we show there are no walls
which generalizes \cite[Prop 4.2]{Mwall}:
\begin{myle}\label{nowalls}
For $s=0$ there are no walls for $\ch=(r,l,\chi)$ for any $\chi,r \in \mathbb{Z}$.
\end{myle}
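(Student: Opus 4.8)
The plan is to exploit the primitivity of $c_1=l$ together with the positivity of the imaginary part of the central charge on the tilted heart $\A_0$. Concretely, I would argue that a wall would force the class $(r,l,\chi)$ to split, inside a strictly semistable object, into two pieces each carrying a \emph{strictly} positive multiple of $l$ in its first Chern class, which is impossible since the two multiples are nonnegative integers summing to $1$.

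First I would recall that a wall at some $t_0>0$ is exactly a value of $t$ at which a $\sigma_{t_0}$-semistable object $E$ with $\ch(E)=(r,l,\chi)$ becomes strictly semistable; this yields a short exact sequence $0\to F\to E\to Q\to0$ in $\A_0$ with $\mu_{0,t_0}(F)=\mu_{0,t_0}(E)=\mu_{0,t_0}(Q)$. Writing $\ch_1(F)=c_Fl$ and $\ch_1(Q)=c_Ql$, additivity of the Chern character gives $c_F+c_Q=1$.

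Next I would establish the two numerical constraints that do the work. On one hand, for $A\in\A_0$ with $\ch_1(A)=c_Al$ one has $\operatorname{Im}(Z_{0,t}(A))=2tdc_A$, and since $H^{-1}(A)\in F_0$ forces $c_1(H^{-1}(A))\cdot l\le0$ while $H^0(A)\in T_0$ forces $c_1(H^0(A))\cdot l\ge0$, it follows that $c_A\ge0$, with equality precisely when $\mu_{0,t}(A)=+\infty$ for all $t$. On the other hand, because $c_1(E)=l$ gives $\operatorname{Im}(Z_{0,t_0}(E))=2td>0$, the common slope $\mu_{0,t_0}(E)$ is \emph{finite}. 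For $F$ and $Q$ to share this finite slope their central charges must have strictly positive imaginary part, i.e. $c_F\ge1$ and $c_Q\ge1$, whence $c_F+c_Q\ge2$, contradicting $c_F+c_Q=1$.

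The step I would treat most carefully—and the only place the short exact sequence is essential—is the claim that $F$ and $Q$ acquire the \emph{same} finite slope on the wall. This follows from $Z_{0,t_0}(E)=Z_{0,t_0}(F)+Z_{0,t_0}(Q)$: since $\mu_{0,t_0}(F)=\mu_{0,t_0}(E)$, the vectors $Z_{0,t_0}(F)$ and $Z_{0,t_0}(E)$ are positive real multiples of one another, so $Z_{0,t_0}(Q)$ lies on the same ray and is nonzero because $Q\neq0$; hence $\mu_{0,t_0}(Q)=\mu_{0,t_0}(E)$ as well. Granting this, the numerical contradiction $c_F+c_Q\ge2$ rules out any strictly semistable object of class $(r,l,\chi)$ for every $t_0>0$, so there are no walls, uniformly in $r$ and $\chi$.
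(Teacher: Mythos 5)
Your proof is correct and is essentially the paper's own argument: both hinge on the fact that $c_1\cdot l\geq 0$ on $\A_0$ together with the primitivity of $l$, so that in any short exact sequence $0\to F\to E\to Q\to 0$ one of the two pieces has $c_1=0$ and hence infinite $\mu_{0,t}$-slope, making the (de)stabilizing behaviour independent of $t$. The paper phrases this as a dichotomy (the sequence destabilizes for all $t$ or for no $t$) while you rule out a strictly semistable object on a putative wall, but the underlying mechanism is identical.
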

\begin{proof}\hspace{-5pt}\footnote{We are grateful to the referee for pointing
    out this elegant proof.}
If $E\in\A_0$ then $c_1(E)\cdot l\geq0$ and $c_1(E)=0$ if and only if
$\mu_{0,t}(E)=\infty$. If $E$ has $c_1(E)=l$ and 
sits in a short exact sequence
\[0\to K\to E\to Q\to 0\]
in $\A_0$ then either \\
(i) $c_1(K)=0$ in which case the sequence destabilizes $E$ for all
$t$, or\\
(ii) $c_1(Q)=0$ in which case the sequence does not destabilize for
any $t$.\\
It follows that there are no walls in $\A_0$.
 \end{proof}

\begin{myrem}\label{modequal}
It follows that
$\MGS{(0,l,\chi)}=\MBS{t}{(0,l,\chi)}$ for all $t$ and $s=0$
 and, by Remark \ref{huy}, $\ch=(\chi,-l,0)$ also has no walls for $s=0$. Hence, for all $t>0$,
 \begin{equation*}
 \mathcal{M}^{BS,t}_{(\chi, l,0)}=
 \mathcal{M}^{GS}_{(\chi, l,0)},\quad\text{when $\chi\geq0$}
\end{equation*}
and
\[
 \mathcal{M}^{BS,t}_{(\chi,l,0)}= \Delta
 \mathcal{M}^{GS}_{(-\chi,l,0)}[1],\quad\text{when $\chi<0$}.
\]
\end{myrem}

\begin{mydef}
 We shall say that the moduli space $\mathcal{M}_{(r,cl,\chi)}^{BS,t}$
 of Bridgeland stable sheaves of Chern character $(r,cl,\chi)$
 satisfies IT$_0$ (respectively WIT$_0$) if and only if for each $E$
 representing an object of $\mathcal{M}_{(r,cl,\chi)}^{BS}$, $E$ satisfies
 IT$_0$ (respectively WIT$_0$).
\end{mydef}
For example  $\mathcal{M}_{(1,nl,n^2d-k)}^{BS,t}$ is IT$_0$ for all $t$
if and only if $L^n$ is $(k-1)$-very ample and so $\phi_L(n)\geq k-1.$ 
 Note that if $\mathcal{M}$ is a fine
 moduli space and $[E]\in\mathcal{M}$ then $E$ is IT$_0$ if and only if all
 $F\in[E]\in\mathcal{M}$ are IT$_0$. This may not be true when
 the moduli space is not fine (and there exist non-Gieseker stable
 sheaves) because the IT$_0$ condition is not
 preserved by $S$-equivalence. However, the moduli spaces we consider below
 will all be fine.

The following technical result will be useful in the next section:
\begin{myle}\label{x<d}
$\MGS{(0,l,\chi)}$ is IT$_0$ if and only if $\chi\geq d+1$.
\end{myle}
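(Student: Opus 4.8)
The plan is to describe the sheaves parametrised by $\MGS{(0,l,\chi)}$ concretely and then turn the IT$_0$ condition into a cohomological vanishing on a curve. Since $L$ generates $\operatorname{NS}(S)$, the class $l=c_1(L)$ is primitive, so every effective divisor in $|L|$ is integral, and by adjunction (using $K_S=\mathcal{O}_S$) any $C\in|L|$ has arithmetic genus $g=d+1$ and dualising sheaf $\omega_C=L|_C$ of degree $2d=2g-2$. First I would check that a Simpson-semistable sheaf $E$ with $\ch(E)=(0,l,\chi)$ is generically of rank one on a single integral curve $C\in|L|$: writing $c_1(E)=\sum_i r_i[C_i]$ with each $[C_i]$ a positive multiple of $l$ forces one component of multiplicity one. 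Thus $E=\iota_*M$ for $\iota\colon C\hookrightarrow S$ and a rank-one torsion-free sheaf $M$ on $C$ with $\chi(M)=\chi$, hence $\deg M=\chi+g-1=\chi+d$; any such $M$ is automatically stable.

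Next I would reduce the index condition to $C$. As $\iota$ is a closed immersion, $H^i(S,E\otimes\mathcal{P}_{\hat x})=H^i(C,M\otimes\mathcal{P}_{\hat x}|_C)$, and $H^2$ vanishes for dimension reasons, so $E$ is IT$_0$ exactly when $H^1(C,M\otimes\eta)=0$ for every $\eta$ in the image of $\hat S\to\operatorname{Pic}^0(C)$. Serre duality on $C$ identifies $H^1(C,M\otimes\eta)$ with the dual of $\Hom_{\mathcal{O}_C}(M\otimes\eta,\omega_C)$, which is the space of global sections of a rank-one torsion-free sheaf of degree $2g-2-\deg(M\otimes\eta)$.

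For the sufficiency direction I would note that $\chi\ge d+1$ gives $\deg M=\chi+d\ge 2d+1=2g-1$, so for every degree-zero twist $\eta$ the degree $2g-2-\deg(M\otimes\eta)$ is at most $-1$; a torsion-free sheaf of negative degree on an integral curve has no global sections, whence $H^1=0$ for all $\hat x$ and all $M$, and every point of the moduli space is IT$_0$. For the necessity direction it is enough to exhibit one failure: fixing any integral $C\in|L|$ and an effective Cartier divisor $A$ of degree $d-\chi\ge0$ on the smooth locus of $C$, set $M=\omega_C(-A)$; then $\chi(M)=\chi$ and $H^1(C,M)\cong\Hom_{\mathcal{O}_C}(M,\omega_C)^\ast=H^0(C,\mathcal{O}_C(A))^\ast\neq0$. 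Since $\mathcal{P}_0=\mathcal{O}_S$, this already violates IT$_0$ at $\hat x=0$, so $\MGS{(0,l,\chi)}$ is not IT$_0$ when $\chi\le d$.

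The step needing the most care is the structural identification: one must use primitivity of $l$ to rule out reducible or non-reduced support and check the argument is insensitive to whether $C$ is singular or $M$ locally free (so that $\deg$, $\chi$ and Serre duality behave as on a smooth curve). The rest is numerical bookkeeping, the whole statement resting on the single equivalence $\deg M\ge 2g-1\Leftrightarrow\chi\ge d+1$. I do not expect the ``for all $\hat x$'' quantifier to be an obstacle: sufficiency yields vanishing for every degree-zero twist simultaneously, while necessity needs only the trivial twist, so the proper inclusion $\hat S\hookrightarrow\operatorname{Pic}^0(C)$ never has to be analysed.
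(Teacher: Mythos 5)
Your argument is correct, but it takes a genuinely different route from the paper. The paper never looks at the support curve at all: it applies the Fourier--Mukai transform $\Phi[1]$ followed by the derived dual $[1]\Delta$, uses the no-wall statement (Lemma \ref{nowalls}) and Proposition \ref{Gstable} to identify $\MGS{(0,l,\chi)}$ with $\MGS{(\chi,l,0)}$, observes that $[E]$ is IT$_0$ exactly when the corresponding sheaf of rank $\chi$ is locally free, and finally detects the existence of non-locally-free members of $\MGS{(\chi,l,0)}$ by non-emptiness of $\MGS{(\chi,l,1)}$, which the Bogomolov inequality converts into $\chi\leq d$. Your proof instead describes $\MGS{(0,l,\chi)}$ directly as rank-one torsion-free sheaves on integral curves of class $l$ (the relative compactified Jacobian) and reduces IT$_0$ to a vanishing statement on the curve via adjunction and Serre duality; this is more elementary, needing neither stability conditions nor the Mukai--Yoshioka non-emptiness results, and your necessity direction produces a completely explicit non-IT$_0$ sheaf $\iota_*\omega_C(-A)$ rather than an indirect existence argument. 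The points you flag as delicate are the right ones and do go through: primitivity of $l$ forces the support to be a single integral (Gorenstein) curve with multiplicity one, so $E=\iota_*M$ and Serre duality applies; and the vanishing $\Hom(M\otimes\eta,\omega_C)=0$ for $\chi\geq d+1$ is perhaps most safely phrased via Euler characteristics (a nonzero map of rank-one torsion-free sheaves on an integral curve is injective, forcing $\chi=\chi(M\otimes\eta)\leq\chi(\omega_C)=d$) rather than via degrees of non-locally-free sheaves. One cosmetic slip: a curve of class $l$ lies in $|L\otimes\P_{\hat x}|$ for some $\hat x$, i.e.\ in a translate of $|L|$, not necessarily in $|L|$ itself; this changes nothing in the adjunction computation. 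What the paper's approach buys is uniformity with the rest of its machinery (the same transform-plus-duality trick recurs throughout); what yours buys is independence from that machinery and an explicit witness for the failure of IT$_0$.
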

\begin{proof}
We use Proposition \ref{Gstable}, Remarks \ref{duality} and
\ref{modequal}, and Lemma \ref{nowalls} to give isomorphisms
\[\MGS{(0,l,\chi)}\overset{\Phi[1]}\cong
\MBS{t}{-(\chi,-l,0)}\overset{[1]\Delta}{\cong}
\MBS{t}{(\chi,l,0)}=\MGS{(\chi,l,0)}\]
for all $t>0$.
Then $[E]\in\MGS{(0,l,\chi)}$ is IT$_0$ if and only of
$[\Phi(E)[1]]\in\MBS{t}{-(\chi,-l,0)}\cap\MGS{(\chi,-l,0)}[1]$ which
holds if and only if $\Delta\Phi(E)\in\MGS{(\chi,l,0)}$ is
locally-free.
But, since all representative sheaves of $\MGS{(\chi,l,0)}$ must be
$\mu$-stable, we see that there are non-locally-free sheaves in
$\MGS{(\chi,l,0)}$ if and only if
$\MGS{(\chi,l,1)}\neq\emptyset$. This happens exactly when the
Bogomolov inequality holds for the Chern character $(\chi,l,1)$, in
other words when $\chi\leq d$ as required.
\end{proof}

\section{Polarization Line Bundles}\label{general}

 Let $L$ be polarization line bundle on an abelian surface $S$ such
 that NS$(S)=\langle L\rangle$. Let
 $c_1(L)=l$ and $l^2=2d$. In this section we will prove some
 lemmas that help us to find the value of $\phi_L(n)$. 
We start with the case $n=1$: 
 \begin{mypr}\label{bauer}
 If $L$ is an ample line bundle with $c_1(L)^2=2d$ on an abelian surface $X$, then 
 \[\phi_L(1)=\left \lfloor \frac{d-3}{2} \right \rfloor\]
\end{mypr}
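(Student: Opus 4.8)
The plan is to reduce the computation of $\phi_L(1)$ to the minimal length of a $0$-dimensional scheme failing to impose independent conditions on $|L|$, and then to pin that length down with a Serre (Reider-type) construction, the Bogomolov inequality, and the non-emptiness criterion for Gieseker moduli. First I would invoke the Proposition relating $k$-very ampleness to WIT$_0$, together with the translation trick in its proof (which lets one test $L$ itself rather than every translate $L\otimes\mathcal{P}_{\hat x}$), to restate the problem: $L$ is $k$-very ample if and only if $H^1(L\otimes\mathcal{I}_X)=0$ for all $X$ with $|X|\le k+1$. Writing $m_0$ for the least length of an $X$ with $H^1(L\otimes\mathcal{I}_X)\neq0$, this yields $\phi_L(1)=m_0-2$, so it suffices to prove $m_0=\lceil d/2\rceil$. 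A structural remark that keeps the analysis tractable: since $H^2(L\otimes\mathcal{I}_X)=0$, we have $H^1(L\otimes\mathcal{I}_X)=H^0(L\otimes\mathcal{I}_X)-(d-|X|)$, so any failing $X$ with $|X|\le d$ satisfies $H^0(L\otimes\mathcal{I}_X)>0$, i.e. is collinear (contained in some $C\in|L|$).

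For the lower bound $m_0\ge\lceil d/2\rceil$ I would take a minimal failing $X$ of length $m=m_0$ and apply the Serre construction: as $K_S=\mathcal{O}_S$, a nonzero class in $\Ext^1(L\otimes\mathcal{I}_X,\mathcal{O}_S)\cong H^1(L\otimes\mathcal{I}_X)^\ast$ gives an extension
\[0\to\mathcal{O}_S\to\mathcal{E}\to L\otimes\mathcal{I}_X\to0\]
with $\ch(\mathcal{E})=(2,l,d-m)$, and for a minimal $X$ the Cayley--Bacharach condition makes $\mathcal{E}$ locally free. Because $\operatorname{NS}(S)=\langle L\rangle$, any sub-line-bundle of $\mathcal{E}$ has first Chern class $bl$ with $b\in\mathbb{Z}$; comparing the induced maps into $\mathcal{O}_S$ and into $L\otimes\mathcal{I}_X$ rules out $b\ge1$ (for $b\ge2$ the map to $L$ vanishes and $L^b$ cannot inject into $\mathcal{O}_S$; for $b=1$ a nonzero map $L\to L\otimes\mathcal{I}_X$ would be a constant vanishing on $X$). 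Hence $\mathcal{E}$ is $\mu$-stable of slope $d$, and the abelian-surface Bogomolov inequality $2r(\mathcal{E})\chi(\mathcal{E})\le c_1^2(\mathcal{E})$ reads $4(d-m)\le2d$, i.e. $m\ge d/2$, so $m_0\ge\lceil d/2\rceil$ and $\phi_L(1)\ge\lfloor(d-3)/2\rfloor$.

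For sharpness I would reverse the construction at $m=\lceil d/2\rceil$. The virtual dimension $2c^2d-2r\chi+2$ of $\MGS{(2,l,d-m)}$ equals $4m-2d+2$, which is $\ge2$ precisely when $m\ge d/2$; so for $m=\lceil d/2\rceil$ this space is non-empty. Taking a $\mu$-stable locally free $\mathcal{E}$ in it, stability forces $\Hom(\mathcal{E},\mathcal{O}_S)=0$, whence $h^2(\mathcal{E})=0$ and $h^0(\mathcal{E})\ge\chi(\mathcal{E})=\lfloor d/2\rfloor\ge1$ for $d\ge3$ (the cases $d\le2$, where the formula gives $-1$, are handled directly: $h^0(L)=d\le2$ and $L$ is not globally generated). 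A section then vanishes on a length-$m$ scheme $X$ with no divisorial part (again by $\mu$-stability ruling out $b\ge1$ sub-line-bundles), fitting into the sequence above. Since $\mathcal{E}$ is a non-split extension, the class in $H^1(L\otimes\mathcal{I}_X)^\ast$ is nonzero, so $H^1(L\otimes\mathcal{I}_X)\neq0$ and $X$ witnesses the failure of $(\lceil d/2\rceil-1)$-very ampleness. Combining the bounds gives $m_0=\lceil d/2\rceil$ and $\phi_L(1)=\lceil d/2\rceil-2=\lfloor(d-3)/2\rfloor$.

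The main obstacle is the lower bound: ensuring a minimal failing $X$ may be taken to satisfy Cayley--Bacharach so that $\mathcal{E}$ is genuinely locally free, and confirming its $\mu$-stability, without which Bogomolov is unavailable. The collinearity reduction is what controls this: the quotient in $0\to\mathcal{O}_S\to L\otimes\mathcal{I}_X\to Q\to0$ has $\ch(Q)=(0,l,d-m)$ and is a pure sheaf on a curve in $|L|$, exactly the class governed by Lemma \ref{x<d}. Its proof identifies precisely when such sheaves fail IT$_0$ in terms of $\chi(Q)=d-m$, giving an independent confirmation of the threshold $m=\lceil d/2\rceil$ and of the extremal configuration.
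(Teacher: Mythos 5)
Your argument is correct in substance, but it takes a genuinely different route from the paper. The paper's proof is a three-line Fourier--Mukai computation: $\ch(\Phi(L\otimes\mathcal{I}_X))=(d-|X|,-l,1)$, and $L\otimes\mathcal{I}_X$ is IT$_0$ precisely when every such transform is locally free, which fails exactly when $\MGS{(d-|X|,-l,2)}\neq\emptyset$; the Bogomolov inequality for that Chern character gives the threshold $2(d-|X|)\leq d$ directly, with no case split and no separate sharpness argument. You instead run the classical Serre/Reider construction: a minimal failing $X$ yields a $\mu$-stable locally free extension $0\to\mathcal{O}_S\to\mathcal{E}\to L\otimes\mathcal{I}_X\to0$ with $\ch(\mathcal{E})=(2,l,d-m)$, Bogomolov gives $m\geq d/2$, and non-emptiness of $\MGS{(2,l,d-\lceil d/2\rceil)}$ plus a section of a stable member gives sharpness. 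This is essentially the original Bauer--Szemberg argument that the paper explicitly says it is re-deriving ``in the spirit of this paper'' via transforms, so you have reconstructed the classical proof rather than the paper's; what you gain is independence from Fourier--Mukai theory, and what you lose is brevity: your route forces the two inequalities to be proved separately, requires the Cayley--Bacharach/local-freeness bookkeeping, and needs the $d\leq2$ cases handled by hand. Two points you should tighten: (i) you assert without proof that $\MGS{(2,l,d-\lceil d/2\rceil)}$ contains a locally free member --- this does follow, since a non-locally-free member would have $\mu$-stable double dual with $\chi\geq d-\lceil d/2\rceil+1>d/2$, violating Bogomolov, but it needs saying; (ii) $\mu$-stability of rank-$2$ sheaves here should be reduced to saturated (hence line-bundle) subsheaves before your $b\geq1$ exclusion applies. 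Both are routine. Your final remark connecting the extremal configuration to Lemma \ref{x<d} is apt: the quotient $Q$ with $\ch(Q)=(0,l,d-m)$ fails IT$_0$ exactly when $d-m\leq d$ is violated in the other direction, i.e.\ $\chi(Q)\leq d$, which is the same threshold seen from the curve side and is precisely how the paper's Proposition \ref{uper} exploits it for $n>1$.
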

\begin{proof}
 The Chern character of $E=L\otimes\mathcal{I}_{X}$ is
 $(1,l,d-|X|)$.  Then
 $\ch(\Phi(E))=(d-|X|,-l,1)$. Such objects are all locally-free
 sheaves exactly when there are no stable sheaves with Chern character
 $(d-|X|,-l,2)$.  These exist exactly when the Bogomolov inequality
 holds for such a Chern character (see Definition \ref{moduli}). This gives us the criterion
 $2(d-|X|)\leq d$, so $|X|\geq d/2$. Hence
 $\MGS{(1,l,d-|X|)}$ is IT$_0$ if and only if $|X| \leq \left
 \lfloor \dfrac{d-1}{2} \right \rfloor$. Then $\phi_L(1)=\left \lfloor
 \dfrac{d-3}{2} \right \rfloor$.
\end{proof}
The $n=1$ case is exceptional and we now assume $n>1$ and find an
upper bound for $\phi_L(n)$:
 \begin{mypr}\label{uper}
  Let $(S,L)$ be a polarized abelian surface such that
  $\operatorname{NS}(S)=\langle L\rangle$ and $c_1(L)^2=2d$, then
  $\phi_L(n) \leq2(n-1)d-2$ for $n>1.$  
\end{mypr}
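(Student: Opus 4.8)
The plan is to establish the bound by producing, for each $n>1$, a single $0$-dimensional subscheme $X$ of length exactly $2(n-1)d$ for which $L^n\otimes\mathcal I_X$ fails to be WIT$_0$. By the Proposition relating $k$-very ampleness to the WIT$_0$ condition, the existence of such an $X$ shows that $L^n$ is not $(|X|-1)$-very ample, i.e.\ not $(2(n-1)d-1)$-very ample, and hence $\phi_L(n)\le 2(n-1)d-2$. So the whole problem reduces to a good choice of $X$.

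I would place $X$ on a curve in the polarization class, mimicking the collinear-points construction used above for the principally polarized case. Choose an integral curve $D\in|L|$ (a general member, or the theta divisor when $d=1$, which is irreducible since $\operatorname{NS}(S)=\langle L\rangle$ forces $S$ to be simple); by adjunction $D$ is Gorenstein of arithmetic genus $d+1$ with dualizing sheaf $\omega_D\cong L|_D$. Now $\deg_D(L^{n-1}|_D)=(n-1)l^2=2(n-1)d$, and this line bundle is effective on $D$, so I let $X$ be the zero divisor of a nonzero section. Since $D$ is integral, $X$ is a genuine $0$-dimensional Cartier divisor of length $2(n-1)d$ with $\mathcal O_D(X)\cong L^{n-1}|_D$. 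Because $X\subset D$, multiplication by the defining section of $D$ exhibits $L^{n-1}=L^n(-D)$ as a subsheaf of $L^n\otimes\mathcal I_X$, giving
\[0\to L^{n-1}\to L^n\otimes\mathcal I_X\to Q\to 0,\qquad \ch(Q)=(0,l,d).\]

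The next step is to identify $Q$ precisely. Comparing this sequence with $0\to L^{n-1}\to L^n\to i_*(L^n|_D)\to 0$ via the snake lemma shows $Q=\ker\bigl(i_*(L^n|_D)\to\mathcal O_X\bigr)=i_*\bigl(L^n|_D(-X)\bigr)$, and by the choice of $X$ together with adjunction $L^n|_D(-X)\cong L|_D\cong\omega_D$, so $Q\cong i_*\omega_D$. Its Chern character $(0,l,d)$ is exactly the threshold value at which Lemma \ref{x<d} predicts IT$_0$ to fail; concretely, Serre duality on $D$ gives $H^1(S,Q)=H^1(D,\omega_D)\cong\mathbb K\ne 0$, so $Q$ is not even IT$_0$ at the origin.

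Finally I would transfer the failure to $L^n\otimes\mathcal I_X$. Twisting the displayed sequence by $\mathcal P_{\hat x}$ and taking cohomology at $\hat x=0$, the IT$_0$ property of the ample bundle $L^{n-1}$ forces $H^1(L^{n-1})=H^2(L^{n-1})=0$, whence $H^1(L^n\otimes\mathcal I_X)\cong H^1(Q)\ne 0$. Thus $L^n\otimes\mathcal I_X$ is not WIT$_0$, which yields the claimed inequality. The routine part is the Chern-character bookkeeping; the real care is needed in the geometric set-up: one must guarantee an integral member of $|L|$ and the effectivity of $L^{n-1}|_D$ so that $X$ is honestly $0$-dimensional of length $2(n-1)d$, and then pin down the cokernel as $i_*\omega_D$. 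I expect this identification of $Q$ — and in particular the recognition that length $2(n-1)d$ is precisely what makes $\ch(Q)$ land on the Lemma \ref{x<d} threshold $(0,l,d)$ — to be the crux, since once $Q\cong i_*\omega_D$ the non-vanishing of $H^1$ is immediate.
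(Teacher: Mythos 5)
Your argument is correct, and it reaches the same key exact sequence as the paper, namely $0\to L^{n-1}\to L^n\otimes\mathcal I_X\to Q\to 0$ with $\ch(Q)=(0,l,d)$ and $Q$ failing IT$_0$, but you build it from the opposite end. The paper starts abstractly: Lemma \ref{x<d} supplies \emph{some} sheaf $Q$ with $\ch(Q)=(0,l,d)$ that is not IT$_0$, a Riemann--Roch computation ($\chi(L^{-n+1}\otimes Q)=d(3-2n)<0$) gives $\Ext^1(Q,L^{n-1})\neq0$, and a non-trivial extension $E$ is then shown to be torsion-free by a diagram chase on its torsion subsheaf, whence $E\cong L^n\otimes\mathcal I_X$ with $|X|=2d(n-1)$. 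You instead construct $X$ explicitly as the zero divisor of a section of $L^{n-1}|_D$ on an integral curve $D\in|L|$ (integrality is automatic here: since $\operatorname{NS}(S)=\langle L\rangle$ with $l^2=2d>0$, no member of $|L|$ can decompose as a sum of effective classes), identify $Q\cong i_*\omega_D$ by adjunction, and get $H^1(Q)\cong H^0(\mathcal O_D)^*\neq0$ by Serre duality on $D$. Your route buys elementarity and self-containedness --- it bypasses Lemma \ref{x<d} (and hence the Fourier--Mukai/Bridgeland machinery behind it) and avoids the torsion-freeness argument entirely, since $E$ is an ideal sheaf by construction; it also makes the destabilizing geometry visible ($X$ lies on a translate of the polarization divisor, matching the ``collinear'' picture from the principally polarized case). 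The paper's route, while less explicit, fits the stability-theoretic framework used in the rest of the argument and produces the non-IT$_0$ object directly in the form needed for the wall analysis of Theorem \ref{phi_L}. Your small hedges (existence of an integral member of $|L|$, effectivity of $L^{n-1}|_D$) are both easily discharged --- the latter since $\chi(L^{n-1}|_D)=(2n-3)d>0$ for $n\geq2$ --- so there is no gap.
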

 
\begin{proof}
 By Lemma \ref{x<d}, there is $Q$ with Chern character
 $\ch(Q)=(0,l,d)$ which is not IT$_0$. Since $\chi(L^{-n+1}\otimes
 Q)=d(3-2n)<0$ for $n>1$ we have $\Ext^1 (Q, L^{n-1}) \neq 0$. Pick a non
 trivial extension  
\[0\to L^{n-1}\to E\to Q\to 0\]
and suppose $T \hookrightarrow E$ is its torsion subsheaf. Then we
have the following diagram:

\begin{displaymath}
    \xymatrix{ &&F \ar[r] & Q/T & \\
               0 \ar[r] & L^{n-1} \ar[r]\ar[ur] &E \ar[r]\ar[u] & Q \ar[r]\ar[u]& 0 \\
               &&T\ar[u] \ar@{=}[r] & T\ar[u]  & }
\end{displaymath}
Then $Q/T$ must be supported in dimension zero. But then
$\Ext^1(Q/T,L^{n-1})=0$ and so $L^{n-1}\to F\to Q/T$ must split, which
is impossible as $F$ is torsion-free and $Q/T$ is torsion. Hence $T=0$.
 Then $E\cong L^n\otimes\mathcal{I}_X$ for some $X$ of length $|X|=2d(n-1)$ and $E$ is not IT$_0$. 
\end{proof}
The following theorem proves that the upper pound of $\phi_L(n)$ in Proposition \ref{uper} is sharp.
 \begin{mythe}\label{phi_L}
  Let $(S,L)$ be a polarized abelian surface with
  $\operatorname{NS}(S)=\langle L\rangle$ and $c_1(L)^2=2d$, then $\phi(L^n) =2d(n-1)-2$.
\end{mythe}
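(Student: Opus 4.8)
The plan is to combine the upper bound from Proposition \ref{uper} with a matching lower bound, so that $\phi_L(n)=2d(n-1)-2$ follows from squeezing. By the discussion after the definition of $\MBS{t}{(r,cl,\chi)}$ satisfying IT$_0$, establishing $\phi_L(n)\geq 2d(n-1)-2$ is equivalent to showing that $\MGS{(1,nl,n^2d-k)}$ (equivalently $\MBS{t}{\cdot}$, since there are no relevant walls) is IT$_0$ for $k=2d(n-1)-1$, i.e. that every $E=L^n\otimes\mathcal{I}_X$ with $|X|\leq 2d(n-1)-1$ is IT$_0$. Since Proposition \ref{uper} already supplies the reverse inequality via an explicit non-IT$_0$ object at length exactly $2d(n-1)$, the whole theorem reduces to this IT$_0$ statement at the critical length $k=2d(n-1)-1$.

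The main work is to prove that statement, and I would do it by induction on $n$, using the technical Lemma \ref{x<d} as the base case and the engine. For the inductive step, suppose $L^n\otimes\mathcal{I}_X$ fails to be IT$_0$ for some $X$ with $|X|\leq 2d(n-1)-1$; equivalently, there is some $\hat x$ with $H^1(L^n\otimes\mathcal{P}_{\hat x}\otimes\mathcal{I}_X)\neq0$, and after translating we may assume $\hat x=0$, so $H^1(L^n\otimes\mathcal{I}_X)\neq0$. The strategy is to translate this cohomological failure into a destabilizing object in $\A_0$ for the Bridgeland slope, using the machinery of section \ref{BS}. Concretely, a failure of IT$_0$ means $E=L^n\otimes\mathcal{I}_X$ is not $\sigma_t$-semistable for all $t$ (via Remark \ref{huy}, which relates $\sigma_t$-stability of $E$ to $\sigma_{1/t}$-stability of $\Phi(E)$, and hence to the locally-freeness/IT$_0$ property). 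So I would analyze a destabilizing subobject $F\hookrightarrow E$ in $\A_0$, use Remark \ref{destab_is_sheaf} to conclude $F$ is a torsion-free sheaf with $\ch(F)=(r,cl,\chi)$ and $c>0$, and then exploit the necessary inequality \eqref{f2}, namely $n\chi-cn^2d+c|X|>0$. The goal is to show this forces a contradiction with the Bogomolov inequality for the relevant Chern characters once $|X|\leq 2d(n-1)-1$.

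To run the induction, I would set up the key exact sequence relating level $n$ to level $n-1$. Given the destabilizing $F$ with $c\leq nr$ (from the slope comparison after \eqref{f2}), the idea is to twist down by $L$ and compare $F\otimes L^{-1}$ (or an associated subquotient) against a level-$(n-1)$ picture: either $F$ itself witnesses a failure of IT$_0$ at level $n-1$ for a subscheme of controlled length, which the inductive hypothesis excludes, or the numerics of $(r,c,\chi)$ are pinned down tightly enough that \eqref{f2} together with Bogomolov $2rc^2d-2r^2\chi$... forces $|X|\geq 2d(n-1)$. I expect the wall computation for the specific Chern character $(1,nl,(n-1)^2d+d+1)$ advertised in the introduction to be exactly the mechanism that rules out the borderline destabilizer: showing that at the critical $k$ the only candidate destabilizing classes either violate Bogomolov or come from collinear subschemes that push $|X|$ up to the forbidden value $2d(n-1)$.

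The hard part will be the bookkeeping in the destabilization analysis: controlling the possible $(r,c,\chi)$ of a destabilizing $F$ and showing that the inequality \eqref{f2} combined with the Bogomolov bound leaves no room below length $2d(n-1)$. In particular, the delicate point is that \eqref{f2} is only a necessary condition coming from the $s=0$ slice, so I must verify that a genuine destabilizing wall actually occurs for the extremal length rather than merely a formal numerical possibility; this is where explicitly identifying the walls for $(1,nl,(n-1)^2d+d+1)$ and matching them to the Gieseker-level non-IT$_0$ objects produced by Lemma \ref{x<d} becomes essential, and where the reduction to the $n-1$ case via an exact sequence of the form $0\to L^{n-1}\to E\to Q\to0$ (mirroring Proposition \ref{uper}) must be made to interact correctly with stability.
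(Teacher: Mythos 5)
Your overall architecture matches the paper's: the upper bound from Proposition \ref{uper}, a lower bound obtained by showing every $L^n\otimes\mathcal{I}_X$ with $|X|=2d(n-1)-1$ is IT$_0$, induction on $n$, a wall analysis via \eqref{f2} pinning any destabilizer down to $L^{n-1}\otimes\P_{\hat x}\otimes\mathcal{I}_Y$ with $|Y|$ small, and Lemma \ref{x<d} as the engine. But there is a genuine gap in how you propose to close the inductive step. Your plan is to assume $E=L^n\otimes\mathcal{I}_X$ is not IT$_0$, extract a destabilizing $F\hookrightarrow E$ in $\A_0$, and then rule $F$ out by the dichotomy ``either $F$ witnesses a failure of IT$_0$ at level $n-1$ (excluded by induction) or the numerics force $|X|\geq 2d(n-1)$.'' This cannot work as stated, because for $n\geq3$ the walls genuinely exist: $L^{n-1}\otimes\mathcal{I}_Y$ with $|Y|<d(n-2)-1$ really does destabilize $E$ for suitable $X$ (e.g.\ $X$ containing a long collinear piece), it is IT$_0$ by the inductive hypothesis, and its existence imposes no lower bound of $2d(n-1)$ on $|X|$. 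So your dichotomy omits exactly the case that actually occurs, and you obtain no contradiction from it. The paper's resolution is different in kind: when $E$ is destabilized one does not try to exclude the destabilizer, but instead concludes IT$_0$ for $E$ directly from the short exact sequence $0\to L^{n-1}\otimes\mathcal{I}_Y\to E\to Q\to0$, using that IT$_0$ is closed under extensions, that $L^{n-1}\otimes\mathcal{I}_Y$ is IT$_0$ by induction (since $|Y|<d(n-2)-1\leq 2d(n-2)-1$), and that $Q$ has $\ch(Q)=(0,l,d+1+|Y|)$ with $\chi(Q)\geq d+1$, hence is IT$_0$ by Lemma \ref{x<d}. That extension argument is the missing idea, and without it the presence of genuine walls breaks your proof.

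A second, smaller gap: your reduction ``not IT$_0$ implies not $\sigma_t$-semistable for all $t$'' is not supplied by Remark \ref{huy} alone. It is the contrapositive of ``$\sigma_t$-stable for all $t$ implies IT$_0$,'' which requires Proposition \ref{Gstable} applied to $\Phi(E)$ in the large volume limit together with the Bogomolov inequality for $H^{-1}(\Phi(E))$, namely the computation $\bigl((n-1)^2d+d+1\bigr)(1+|Z|)\leq n^2d$ forcing $|Z|<1$, which works precisely because $|X|=2d(n-1)-1$. This is a genuine, Chern-character-specific step you would need to carry out explicitly; for larger $|X|$ it fails, which is exactly where the sharpness of the bound comes from.
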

\begin{proof}
 By Proposition \ref{uper} we need to show that $\phi_L(n)
 \geq2(n-1)d-2$, and we do this by showing that
 $\MBS{t}{(1,nl,n^2d-k)}$ is IT$_0$ for all $t$ and
 $k=2d(n-1)-1$. Suppose that $E\cong L^n\otimes\mathcal{I}_X$ where
 $|X|=2d(n-1)-1$ is not IT$_0$ and $\hat E:=\Phi(E)$ is $\sigma_t$-stable for
 all $t\gg0$. Then $\hat{E}$ is a two-step
 complex such that $H^{-1}(\hat{E})$ is Gieseker stable and $H^0(\hat{E})$
 is in the form $\mathcal{O}_Z$, by Proposition \ref{Gstable}. The
 Chern character $\ch(H^{-1}(\hat{E}))=((n-1)^2d+d+1,-n,1+|Z|)$. By
 Bogomolov 
\begin{equation}
\bigl((n-1)^2d+d+1\bigr)(1+|Z|)\leq n^2d.
\end{equation}
Therefore $|Z|\leq \dfrac{2d(n-1)-1}{dn^2-2d(n-1)+1}$. But
$d(n-2)^2+2>0$ and so 
\[dn^2-2d(n-1)+1>2d(n-1)-1.\] 
Hence, $|Z|<1$. Therefore
$H^0(\hat{E})=0$ and so $E$ is IT$_0$. 
If $E$ is $\sigma_t$-stable for all $t$, then it follows that
$\Phi(E)$ is $\sigma_t$-stable for all $t$ (and so also for
$t\gg0$). This happens when there are no walls. Unfortunately, there
are walls in general. To finish the proof we will identify all the
walls and show that all $\sigma_t$-semistable objects are IT$_0$ directly.  

\begin{myle}
If $e\in \mathcal{A}_0$ destabilizes $L^n\otimes\mathcal{I}_X$ with $|X|=
2d(n-1)-1$, then $e$ is a rank $1$ torsion-free sheaf.
\end{myle}
\begin{proof}
By Remark \ref{destab_is_sheaf}, $H^{-1}(e)=0$ and $E:=H^0(e)\cong e$ is
torsion-free. Suppose $\ch(E)=(r,g'l,\chi)$ and let $q=L^n\otimes\mathcal{I}_X/E$ in $\A_0$. Then
we have a long exact sequence in $\Coh(S)$:
\begin{equation}
0\to H^{-1}(q) \to E \to L^n\otimes\mathcal{I}_X \to H^0(q)\to 0
\end{equation}
Since $H^{-1}(q) \in \mathcal{F}_0$ and $E \in \mathcal{A}_0$, then
$\mu(E)\geq 0 \geq \mu(H^{-1}(Q))$. Then there is an integer $g>0$
such that $c_1(E)=(nr-g)\ell$. Then $c_1(H^{-1}(Q))=
(nr-g-n+m)\ell\leq 0$ where $m\geq 0$. Therefore
$0 < nr-g\leq n-m\leq n$. Hence, $c_1(E)$ can be written as $c_1(E)=(n-c)\ell$ for
some positive integer $c<n$. Since we can assume $E$ is Bridgeland
stable it must be simple and so the Bogomolov inequality holds (as
this is just the statement that the moduli space of simple
torsion-free sheaves has dimension at least 2) and so we can write 
\begin{equation*}
\chi(E) = \frac{(n-c)^2d}{r}-k,
\end{equation*}
for some rational number $k\geq0$.
Since $E$ is a destabilizer of $L^n\otimes\mathcal{I}_X$, we have
$f(E,L^n\otimes\mathcal{I}_X) >0$. Therefore from a condition (\ref{f2}), we
get 
\begin{equation}\label{n-c}
 \bigl((n-c)^2d -kr\bigr)n-(n^2d-2dn+2d+1)(n-c)r>0,
\end{equation}
Rearrange (\ref{n-c}), we obtain
\begin{equation}\label{n-c2}
(n-c)\bigl(-(n-1)^2dr-dr-r+dn^2-cdn\bigr)>krn>0
\end{equation}
As $n-c>0$, then we get walls if $-(n-1)^2dr-dr-r+dn^2-cdn>0$ so 
\begin{equation*}
\frac{1}{r}>\left(1-\frac{1}{n}\right)^2+\frac{1}{n^2}+\frac{1}{dn^2}\geq\frac{1}{2},
\end{equation*}
for all $n$, since $d>0$. Hence $r=1$. \end{proof}
\begin{myrem}
 The previous lemma proved that the Chern character of any
 destabilizer of $L^n\otimes\mathcal{I}_X$ is given by $\ch(E)=\left(1,n-c,
(n-c)^2d-k\right)$ which means that $E$ is in the form
 $L^{n-c}\otimes\P_{\hat x}\otimes\mathcal{I}_Y$, for some $\hat x\in\hat S$ and $|Y|=k$. 
\end{myrem}
\begin{myle}\label{n-m}
 If $L^{n-m}\otimes\P_{\hat x}\otimes\mathcal{I}_Y$ destabilizes
 $L^n\otimes\mathcal{I}_X$ where $|X|=2d(n-1)-1$, then $m=1$. 
\end{myle}
\begin{proof} We assume, without loss of generality, that $\hat x=0$.
Suppose that $F=L^{n-m}\otimes\mathcal{I}_Y$ with $|Y|=k$ destabilizes
$E=L^n\otimes\mathcal{I}_X$, then $\mu(F)-\mu(E)\geq0$. But if
$\mu(E)=\mu(F)$ then $\mu_{0,t}(E/F)=\infty$ and so $F$ does not destabilize. Therefore from a
condition  (\ref{n-c2}), we get 
\begin{equation*}
(n-m)(-(n-1)^2dr-dr-r+dn^2-dnm)>krn.
\end{equation*}
Then we get walls if and only if 
\begin{equation}\label{fn-m}
\left(1-\frac{m}{n}\right)(2dn-dnm-2d-1)>k\geq0. 
\end{equation}
Since $1-\dfrac{m}{n}$ is positive, this happens if and only if
$2dn-dnm-2d>1$ and then $2\geq 2-\dfrac{d+1}{nd}>m>0$. Hence, $m=1$. 
\end{proof}
\begin{myle}
 If $L^{n-1}\otimes\P_{\hat x}\otimes\mathcal{I}_Y$ destabilizes $L^n\otimes\mathcal{I}_X$ for some
 $X$ where $|X|=2d(n-1)-1$, then $|Y|<d(n-2)-1\leq 2d(n-2)-1$. 
\end{myle}
\begin{proof} Without loss of generality we assume $\hat x=0$.
 Take $F,E$ as Lemma \ref{n-m}, then from \ref{fn-m} we get:
 \begin{equation}
\left(1-\frac{1}{n}\right)(dn-2d-1)>|Y|\geq0
\end{equation}
Since $0<1-\dfrac{1}{n}<1$, we have $dn-2d-1>|Y|$.
\end{proof}
To complete proof of Theorem \ref{phi_L}, we now induct on
$n\geq2$. If $n=2$, then $d(n-2)=0$ and so there are no walls, which
establishes the result for the case $n=2$.
 
 Suppose that the statement is true for
 $n-1\geq2$. i.e. $L^{n-1}\otimes\mathcal{I}_X$ is IT$_0$ for all $X$ with
 $|X|=2d(n-2)-1$  To prove that $L^{n}\otimes\mathcal{I}_{X}$ is IT$_0$ for
 all $X$ with $|X|=2d(n-1)-1$ , we know that the only possible walls
 are given by $L^{n-1}\otimes\mathcal{I}_Y$ where $|Y|<2d\bigl((n-1)-1\bigr)-1$. Then
 there is a short exact sequence 
 \begin{equation*}
0\to L^{n-1}\otimes\mathcal{I}_Y\to L^{n}\otimes\mathcal{I}_{X} \to Q\to0
\end{equation*}
By induction, $L^{n-1}\otimes\mathcal{I}_Y$ is IT$_0$ and by Lemma \ref{x<d},
$Q$ is IT$_0$ as well, since 
\[\chi(Q) =-(n-1)^2d+|Y|+n^2d-2d(n-1)+1= 1+d+|Y|\geq d+1.\]
Hence
$L^{n}\otimes\mathcal{I}_{X}$ is IT$_0$ for all $X$ with $|X|=2d(n-1)-1$. 
\end{proof}


 \bibliographystyle{alpha}
 \bibliography{p58}{}

\end{document}